\pdfoutput=1
\documentclass[11pt,a4paper]{article}
\usepackage[a4paper,margin=25mm]{geometry}
\usepackage{lmodern}
\usepackage[T1]{fontenc}
\usepackage[utf8]{inputenc}
\usepackage{amsmath,amssymb,amsthm,mathtools}
\usepackage{booktabs,microtype}
\usepackage[shortlabels]{enumitem}
\usepackage[round,authoryear]{natbib}
\usepackage{authblk}
\usepackage[unicode]{hyperref}
\hypersetup{hidelinks,pdftitle={Total Path Length in Power-Weight Recursive Trees: Martingale Limits and Global Fluctuations},pdfauthor={Marek Gałązka and Hanna Wdowicka},pdfsubject={Probability; combinatorics},pdfkeywords={random recursive trees, total path length, martingales, limit theorems, probabilistic combinatorics, analysis of algorithms}}
\newtheorem{theorem}{Theorem}[section]
\newtheorem{proposition}[theorem]{Proposition}
\newtheorem{lemma}[theorem]{Lemma}
\newtheorem{corollary}[theorem]{Corollary}

\newtheorem{remark}[theorem]{Remark}
\newcommand{\E}{\mathbb{E}}
\newcommand{\Pp}{\mathbb{P}}
\newcommand{\Var}{\operatorname{Var}}

\newcommand{\F}{\mathcal{F}}
\newcommand{\R}{\mathbb{R}}

\title{\bfseries Total Path Length in Power-Weight Recursive Trees:\\
Martingale Limits and Global Fluctuations}
\author[1]{Marek Gałązka}
\author[2]{Hanna Wdowicka}
\affil[1]{Faculty of Mathematics and Computer Science, Adam Mickiewicz University, Poznań, Poland}
\affil[2]{Department of Statistics, Poznań University of Economics and Business, Poland}
\date{September 2026}

\begin{document}
\maketitle
\begin{abstract}
We study total path length in recursive trees with positive deterministic attachment weights.
Writing $W_n=\sum_{i=1}^n w_i$ and $p_n=w_n/W_n$, we obtain exact martingale-innovation identities and a variance recurrence.
Under the condition $p_n=O(n^{-1})$, centered total path length divided by $n$ converges almost surely and in $L^2$ to a nondegenerate random variable, and its variance is asymptotic to a positive constant times $n^2$.
No polynomial asymptotic for $W_n$ is required.
For power weights $w_i=i^\alpha$, the same argument applies to every real $\alpha$, including the critical and summable regimes beyond the positive-power cumulative-weight assumptions of existing profile theory.
The expected average depth is logarithmic for $\alpha>-1$, iterated logarithmic for $\alpha=-1$, and bounded for $\alpha<-1$, while the global fluctuation scale remains linear throughout.
In the summable regime we identify the random limit through the weighted depths of the infinite tree.
The uniform case recovers the classical variance coefficient $2-\pi^2/6$.
For linear weights we evaluate the coefficient as $8-2\pi^2/3$.
Although this tree and a random binary search tree have identical insertion-depth marginals and expected total path length, their asymptotic variance coefficients differ by one.
This gives an explicit comparison of global dependence that is invisible in individual depth distributions.
\end{abstract}

\noindent\textbf{Keywords:} random recursive trees, total path length, martingales, limit theorems, probabilistic combinatorics, analysis of algorithms

\smallskip

\noindent\textbf{MSC 2020:} 60C05; 05C80; 60G42; 68Q25; 68W20.

\section{Introduction}\label{sec:intro}
Random recursive trees are standard probabilistic models of incrementally generated rooted structures.
Vertices arrive one at a time, and every new vertex stores one link to an earlier vertex.
The uniform model has a long history in combinatorial probability and the analysis of algorithms; see
\citet{Szymanski1990}, \citet{SmytheMahmoud1995}, and \citet{Drmota2009}.
The total path length
\[
T_n:=\sum_{i=1}^n D_i,
\]
where $D_i$ is the depth of vertex $i$, measures how far vertices lie from the root in aggregate.
Its expectation is determined by individual depth means, whereas its fluctuations depend on the joint growth of the tree.
We give a direct martingale-innovation analysis that applies without a regular cumulative-weight asymptotic and makes these fluctuations explicit.

Total path length also has a direct cost interpretation.
Tracing parent pointers from vertex $i$ to the root takes exactly $D_i$ pointer traversals.
Thus $T_n$ is the cumulative cost of performing this operation once from every vertex,
and $T_n/n$ is its conditional average cost for a uniformly sampled stored vertex.

The classical uniform recursive tree already shows that global path length
contains information not visible from the depth of one insertion.
\citet{Mahmoud1991} established an almost-sure and quadratic-mean limit theorem
for its centered internal path length, and \citet{DobrowFill1999} developed
the limiting distribution in substantially greater detail.  Total path
length also plays a central role in the average-case analysis of binary search
trees and Quicksort; see \citet{Knuth1998} and
\citet{FlajoletSedgewick2009}.  In a random binary search tree the expected
internal path length equals
\[
2(n+1)H_n-4n,
\]
while its variance is asymptotic to
$(7-2\pi^2/3)n^2$.

The present paper concerns nonuniform recursive growth.  In a weighted
recursive tree, vertex $i$ receives a deterministic positive weight $w_i$ and
a later vertex chooses its parent with probability proportional to these
weights.  General weighted recursive trees have been studied from several
viewpoints; see, for example, \citet{BorovkovVatutin2006},
\citet{HiesmayrIslak2020}, and \citet{Senizergues2021}.  We focus on the
power-weight family
\[
w_i=i^\alpha,\qquad \alpha\in\R.
\]
This model interpolates between preference for recent vertices
($\alpha>0$), uniform attachment ($\alpha=0$), and preference for old
vertices ($\alpha<0$).

The linear rule $\alpha=1$ appears as a label-attraction model in
\citet{GalazkaEtAl2020}, where degree statistics and degree counts were
studied, and as the young-age preferential-attachment tree of
\citet{LyonMahmoud2020}, where insertion depth, outdegrees, and leaves were
analyzed.  \citet{LyonMahmoud2022} subsequently treated insertion depth in the
full power-weight family.  The Bernoulli representation underlying insertion
depth is implicit in the aggregation-tree coupling of
\citet[equation~(2.2)]{CurienHaas2017} and is explicit for arbitrary positive
deterministic weights in \citet[Corollary~8]{MaillerUribeBravo2019}; see also
the weighted-recursive-tree and general-affinity treatments in
\citet{Senizergues2021,NakataMahmoud2024}.  In particular, for
$\alpha>-1$ the typical insertion-depth scale is logarithmic, at
$\alpha=-1$ it is of order $\log\log n$, and for $\alpha<-1$ it remains on a
finite scale.  In the special young-age case $\alpha=1$, the insertion depth
has exactly the same distribution as the insertion depth in a random binary
search tree \citep{LyonMahmoud2020}.

Path-length functionals have also been studied in several other nonuniform or
weighted tree models.  Examples include weighted path lengths in increasing
trees \citep{KubaPanholzer2007}, internal path length in Hoppe trees
\citep{LeckeyNeininger2013}, and path length in weighted random $b$-ary
recursive trees \citep{MunsoniusRuschendorf2011}.  These uses of ``weighted''
refer to different tree families or to weights on path functionals; they do
not give the total path-length theory for deterministic label weights
$w_i=i^\alpha$ considered here.

\paragraph{Relation to the weighted-recursive-tree profile theory.}
For $\alpha>-1$, the weights $w_i=i^\alpha$ satisfy the assumptions of the
profile analysis of \citet{Senizergues2021}, with $\gamma=\alpha+1$.
In the section ``Height and profile of WRT'', the normalized profile transform
\[
  N_n(z)=n^{-1-\gamma(e^z-1)}\sum_{i=1}^n e^{zD_i}
\]
is shown to converge almost surely, locally uniformly on a complex
neighbourhood of zero, to a random analytic function.
Since $N_n'(0)=T_n/n-\gamma\log n$, differentiation gives an almost-sure
limit for this centered average depth. The convergence of
$(T_n-\mathbb{E}T_n)/n$ in this regime follows after accounting for the
deterministic constant in its mean asymptotics.
The weighted-depth martingale used below also has a direct antecedent in
the section ``Study of the Laplace transform of the weighted profile''.
With normalization starting at index $2$, differentiating
\[
  \mathcal{M}_n(z)=
  \frac{W_n^{-1}\sum_{i=1}^n w_i e^{zD_i}}
       {\prod_{i=2}^n\bigl(1+(e^z-1)p_i\bigr)}
\]
at zero, with $A_n=\sum_{i=1}^n w_iD_i$, yields
$\mathcal{M}_n'(0)=(A_n-\mathbb{E}A_n)/W_n$.

\paragraph{Contributions and scope.}
The model, the one-vertex Bernoulli representation, and the associated depth scales are known.
The profile theory described above also implies the almost-sure centered path-length limit for power weights with $\alpha>-1$.
We do not claim that limit or the underlying weighted-profile martingale as new.
The results below provide a common elementary proof without prescribing a cumulative-weight asymptotic, together with explicit second-order calculations.
More precisely:
\begin{enumerate}[label=(\roman*)]
\item For arbitrary positive deterministic weights, we derive the exact identity
\[
T_n-\E T_n=\sum_{j=2}^n\bigl(1+(n-j)p_j\bigr)\xi_j
\]
and an exact recurrence for $v_j=\E\xi_j^2$.
These formulas describe how each attachment innovation contributes to total path length and its variance.
\item If $p_n=O(n^{-1})$, then $(T_n-\E T_n)/n$ converges almost surely and in $L^2$ to a nondegenerate limit, with
\[
\Var(T_n)\sim n^2\sum_{j=2}^\infty p_j^2v_j.
\]
The hypothesis covers all power weights, including $\alpha=-1$ and $\alpha<-1$, and also bounded oscillating weights for which $W_n/n$ has no limit.
\item For $\alpha<-1$, the average depth has the explicit random limit
\[
\frac{T_n}{n}\longrightarrow 1+\frac{\sum_{i\ge1}i^\alpha D_i}{\zeta(-\alpha)}
\qquad\text{almost surely and in }L^2.
\]
Thus the bounded expected average depth in this regime does not imply convergence to a deterministic constant.
\item For $\alpha=1$, we solve the variance recurrence and obtain
\[
\Var(T_n)\sim\left(8-\frac{2\pi^2}{3}\right)n^2.
\]
The coefficient is larger by exactly one than that of a random binary search tree, although the insertion-depth marginals and expected total path length coincide.
\end{enumerate}
For completeness, we also obtain the exact expectation and its three power-weight regimes by summing the known one-vertex means,
recover the classical uniform-tree benchmark, and give a simple exact parent sampler for linear weights.
These supporting results place the fluctuation analysis in context.

Section~\ref{sec:model} introduces the model and recalls the depth representation.
Sections~\ref{sec:sampling} and \ref{sec:expectation} give the linear-weight sampler and the power-weight expectation asymptotics.
Sections~\ref{sec:martingale} and \ref{sec:limit} establish the exact innovation identities and the general limit theorem.
Sections~\ref{sec:oldage} and \ref{sec:youngage} identify the summable-regime limit and evaluate the linear-weight variance coefficient.

\section{Weighted recursive trees and power weights}\label{sec:model}

\subsection{Definition}
Let $w_1,w_2,\ldots>0$ be deterministic weights and put
\[
W_n:=\sum_{i=1}^n w_i,\qquad p_j:=\frac{w_j}{W_j}.
\]
The weighted recursive tree is constructed on $\{1,\ldots,n\}$ from root $1$.
For $n\ge2$, vertex $n$ independently chooses a parent $K_n\in\{1,\ldots,n-1\}$ with
\begin{equation}\label{eq:parent-law}
\Pp(K_n=i)=\frac{w_i}{W_{n-1}},\qquad 1\le i<n.
\end{equation}
All trees are coupled using the same infinite sequence of independent parent choices.
Let $D_1=0$ and let $D_n$ be the depth of vertex $n$. Then
\begin{equation}\label{eq:depth-rec}
D_n=1+D_{K_n},\qquad n\ge2.
\end{equation}
The power-weight specialization is $w_i=i^\alpha$, $\alpha\in\R$, and is denoted by $T_n^{(\alpha)}$ when the tree itself is needed.
For these weights,
\begin{equation}\label{eq:pj}
p_j=\frac{j^\alpha}{\sum_{i=1}^j i^\alpha}.
\end{equation}
In either setting $p_1=1$, and total path length is
\begin{equation}\label{eq:Tn}
T_n:=\sum_{i=1}^nD_i.
\end{equation}

\subsection{Known insertion-depth representation}

The following representation is a direct consequence of the
ancestral-indicator representation in
\citet[Corollary~8]{MaillerUribeBravo2019}, which holds for arbitrary positive
weight sequences.  Indeed, the parent $K_n$ is sampled proportionally to the
weights in the tree on $n-1$ vertices, and $D_n=D_{K_n}+1$.  An implicit
precursor appears in the aggregation-tree coupling of
\citet[equation~(2.2)]{CurienHaas2017}.  Later treatments include
\citet{Senizergues2021,LyonMahmoud2022,NakataMahmoud2024}.  We include a short
derivation for notation and self-containedness because the parameters $p_j$
drive the entire path-length analysis; no novelty is claimed for this
representation.

\begin{proposition}[Bernoulli representation]\label{prop:bernoulli}
For every $n\ge2$,
\[
D_n\overset{d}=\sum_{j=1}^{n-1}B_j,
\]
where $B_1,B_2,\ldots$ are independent Bernoulli variables satisfying
$\Pp(B_j=1)=p_j$.  Equivalently,
\begin{equation}\label{eq:depth-pgf}
\E z^{D_n}
=
\prod_{j=1}^{n-1}(1-p_j+p_jz).
\end{equation}
Consequently,
\begin{align}
\E D_n&=\sum_{j=1}^{n-1}p_j,\label{eq:ED}\\
\Var(D_n)&=\sum_{j=1}^{n-1}p_j(1-p_j).\label{eq:VD}
\end{align}
\end{proposition}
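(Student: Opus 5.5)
We will prove the generating-function identity \eqref{eq:depth-pgf} by induction on $n$. The distributional statement then follows because $\prod_{j=1}^{n-1}(1-p_j+p_jz)$ is the probability generating function of a sum of independent Bernoulli$(p_j)$ variables. The moment formulas \eqref{eq:ED} and \eqref{eq:VD} follow by adding the means $p_j$ and variances $p_j(1-p_j)$ of the independent summands.

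The natural object to track is the weighted depth generating function
\[
G_n(z):=\frac{1}{W_n}\sum_{i=1}^n w_i\,\E z^{D_i},
\]
which is exactly the probability generating function of $D_{n+1}$. To see this, condition on the parent choice: by \eqref{eq:parent-law} and \eqref{eq:depth-rec}, and since $K_{n+1}$ is independent of $(D_1,\dots,D_n)$, which are functions of $K_2,\dots,K_n$ only,
\[
\E z^{D_{n+1}}=z\sum_{i=1}^n\frac{w_i}{W_n}\E z^{D_i}=z\,G_n(z).
\]

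We then derive a recursion for $G_n$. Splitting off the last term gives
\[
W_nG_n(z)=W_{n-1}G_{n-1}(z)+w_n\E z^{D_n}=W_{n-1}G_{n-1}(z)+w_n zG_{n-1}(z).
\]
Dividing by $W_n$ and using $W_{n-1}/W_n=1-p_n$ yields
\[
G_n(z)=(1-p_n+p_nz)\,G_{n-1}(z),\qquad n\ge2.
\]
Together with $G_1(z)=1$, this gives $G_n(z)=\prod_{j=2}^n(1-p_j+p_jz)$. Hence
\[
\E z^{D_{n+1}}=z\prod_{j=2}^{n}(1-p_j+p_jz)=\prod_{j=1}^{n}(1-p_j+p_jz),
\]
since $p_1=1$ makes the $j=1$ factor equal to $z$. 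Replacing $n+1$ by $n$ gives \eqref{eq:depth-pgf} for all $n\ge2$.

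The only delicate point is the independence justification in the conditioning step. One must note that $D_i$ for $i\le n$ is measurable with respect to $(K_2,\dots,K_n)$, while $K_{n+1}$ is independent of these, so that $\E[z^{D_{K_{n+1}}}]=\sum_i \Pp(K_{n+1}=i)\E z^{D_i}$. With that settled, everything else is algebra. The moments follow either from the independent-sum representation or by differentiating $\log$ of the product at $z=1$.
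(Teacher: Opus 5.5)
Your proof is correct and is essentially the paper's argument: the paper tracks the unnormalized sum $S_n(z)=\sum_{i=1}^n w_i\E z^{D_i}$ and derives $F_{n+1}(z)/F_n(z)=1-p_n+p_nz$ from $F_n(z)=zS_{n-1}(z)/W_{n-1}$, and your $G_n=S_n/W_n$ is the same object normalized, giving the same one-step recursion. Your version has the minor advantage of never dividing by $F_n(z)$, and your explicit measurability and independence remark makes the conditioning step fully rigorous.
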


\begin{proof}
Let $F_n(z):=\E z^{D_n}$ and define
\[
S_m(z):=\sum_{i=1}^m w_iF_i(z).
\]
Conditioning on $K_n$ in \eqref{eq:depth-rec} gives
\[
F_n(z)=z\frac{S_{n-1}(z)}{W_{n-1}}.
\]
Therefore
\[
S_n(z)
=
S_{n-1}(z)+w_nF_n(z)
=
S_{n-1}(z)\left(1+\frac{zw_n}{W_{n-1}}\right),
\]
and hence
\[
\frac{F_{n+1}(z)}{F_n(z)}
=
\frac{W_{n-1}+zw_n}{W_n}
=
1+(z-1)\frac{w_n}{W_n}
=
1-p_n+p_nz.
\]
Since $F_2(z)=z$ and $p_1=1$, induction yields
\eqref{eq:depth-pgf}.  The moment formulas follow immediately.
\end{proof}

\begin{remark}[Novelty boundary for depth]\label{rem:depth-boundary}
Proposition~\ref{prop:bernoulli} and the resulting one-vertex depth moments are
background.  The analysis below concerns the accumulated global functional
$T_n$, its martingale representation, and its fluctuations across insertion
times.
\end{remark}

\begin{lemma}[Asymptotics of the Bernoulli parameters]\label{lem:p-asym}
For power weights $w_n=n^\alpha$, the sequence $(p_n)$ has the following behavior.
\begin{enumerate}[label=(\roman*)]
\item If $\alpha>-1$ and $c_\alpha:=\alpha+1$, then for some
$\delta_\alpha>0$,
\begin{equation}\label{eq:p-supercrit}
p_n=\frac{c_\alpha}{n}+O(n^{-1-\delta_\alpha}),
\end{equation}
and in particular
\[
\sum_{n\ge1}\left|p_n-\frac{c_\alpha}{n}\right|<\infty.
\]

\item If $\alpha=-1$, then
\begin{equation}\label{eq:p-critical}
p_n=\frac{1}{nH_n},
\end{equation}
where $H_n=\sum_{k=1}^n1/k$.

\item If $\alpha<-1$, then
\begin{equation}\label{eq:p-old}
p_n\sim\frac{n^\alpha}{\zeta(-\alpha)}
\end{equation}
and $\sum_{n\ge1}p_n<\infty$.
\end{enumerate}
\end{lemma}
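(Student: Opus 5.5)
The plan is to treat the three regimes separately, each time estimating the partial sum $W_n=\sum_{i\le n}i^\alpha$ and then dividing $n^\alpha$ by it.

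\emph{Case (i), $\alpha>-1$.} We compare $W_n$ with $\int_0^n x^\alpha\,dx=n^{\alpha+1}/(\alpha+1)$. For $\alpha\ge0$ the monotone comparison gives $W_n=n^{\alpha+1}/(\alpha+1)+O(n^\alpha)$. For $-1<\alpha<0$ it gives the same up to a bounded error, $W_n=n^{\alpha+1}/(\alpha+1)+O(1)$. Euler--Maclaurin yields $W_n=n^{\alpha+1}/(\alpha+1)+\zeta(-\alpha)+O(n^{\alpha})$ in that range, but a crude bound suffices. In both cases
\[
W_n=\frac{n^{\alpha+1}}{\alpha+1}\bigl(1+O(n^{-\delta})\bigr),
\]
with $\delta=1$ for $\alpha\ge0$ and $\delta=\alpha+1$ for $-1<\alpha<0$. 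Dividing,
\[
p_n=\frac{(\alpha+1)n^\alpha}{n^{\alpha+1}}\bigl(1+O(n^{-\delta})\bigr)=\frac{c_\alpha}{n}+O(n^{-1-\delta}).
\]
The summability of $|p_n-c_\alpha/n|$ then follows because $\sum n^{-1-\delta}<\infty$.

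\emph{Case (ii), $\alpha=-1$.} Here $W_n=H_n$ by definition, so $p_n=n^{-1}/H_n$ exactly. Nothing further is needed.

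\emph{Case (iii), $\alpha<-1$.} Now $W_n\uparrow\zeta(-\alpha)\in(0,\infty)$, so $p_n\sim n^\alpha/\zeta(-\alpha)$. Since $p_n\le n^\alpha/W_1=n^\alpha$ and $\sum n^\alpha<\infty$, the series $\sum p_n$ converges.

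The only point requiring care is the uniform error term in (i) for negative $\alpha$ close to $-1$, where the error $\delta_\alpha=\alpha+1$ degenerates. This is exactly why the statement allows $\delta_\alpha$ to depend on $\alpha$. The integral comparison for a decreasing function, $\int_1^{n+1}x^\alpha\,dx\le W_n\le 1+\int_1^n x^\alpha\,dx$, handles it directly.
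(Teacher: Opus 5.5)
Your proposal is correct and follows essentially the same route as the paper: an integral comparison of $W_n$ giving relative error $O(n^{-1})$ for $\alpha\ge0$ and $O(n^{-(\alpha+1)})$ for $-1<\alpha<0$ (so $\delta_\alpha=\min\{1,\alpha+1\}$), the identity $W_n=H_n$ at $\alpha=-1$, and $W_n\to\zeta(-\alpha)$ for $\alpha<-1$. Your explicit sandwich bounds for the integral comparison and the domination $p_n\le n^\alpha$ spell out steps that the paper leaves implicit.
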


\begin{proof}
For $\alpha>-1$, comparison of the power sum with its integral gives
\[
W_n=\frac{n^{\alpha+1}}{\alpha+1}
\begin{cases}
1+O(n^{-1}),&\alpha\ge0,\\
1+O(n^{-(\alpha+1)}),&-1<\alpha<0.
\end{cases}
\]
Substitution in \eqref{eq:pj} gives \eqref{eq:p-supercrit}, with
$\delta_\alpha=\min\{1,\alpha+1\}$.  The error is summable.

For $\alpha=-1$, $W_n=H_n$, which gives \eqref{eq:p-critical}.
If $\alpha<-1$, then
$W_n\to\sum_{k\ge1}k^\alpha=\zeta(-\alpha)$, proving
\eqref{eq:p-old}; summability follows because $\alpha<-1$.
\end{proof}

\section{An exact linear-time generator for the young-age tree}
\label{sec:sampling}

The power $\alpha=1$ gives
\[
W_{n-1}=\frac{n(n-1)}2
\]
and therefore
\begin{equation}\label{eq:young-parent}
\Pp(K_n=i)=\frac{2i}{n(n-1)},
\qquad 1\le i<n.
\end{equation}
This is the young-age preferential tree of
\citet{LyonMahmoud2020}.  The special linear weight admits a particularly
simple exact generator.

\begin{proposition}[Two-sample parent rule]\label{prop:sampler}
Fix $n\ge2$.  Choose an ordered pair $(U,V)$ uniformly from
\[
\{(u,v):1\le u,v\le n,\ u\ne v\},
\]
and set
\begin{equation}\label{eq:sampler}
K_n:=\max\{U,V\}-1.
\end{equation}
Then $K_n$ has the distribution \eqref{eq:young-parent}.
\end{proposition}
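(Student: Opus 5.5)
The plan is to compute the law of $M:=\max\{U,V\}$ by direct counting and then shift the index by one. The sample space consists of the $n(n-1)$ ordered pairs $(u,v)$ with $1\le u,v\le n$ and $u\ne v$, each equally likely. For $m\in\{2,\ldots,n\}$, the event $\{M=m\}$ requires that one coordinate equal $m$ and the other lie in $\{1,\ldots,m-1\}$. The value $m=1$ cannot occur, because the two coordinates are distinct.

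First I will count the pairs with maximum $m$. There are $m-1$ pairs with $u=m$ and $v<m$, and another $m-1$ with $v=m$ and $u<m$. These two sets are disjoint, since $u\ne v$, so there are $2(m-1)$ such pairs and
\[
\Pp(M=m)=\frac{2(m-1)}{n(n-1)},\qquad 2\le m\le n.
\]
As a sanity check, these probabilities sum to $2\binom{n}{2}/(n(n-1))=1$.

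Next I set $K_n=M-1$, so that $K_n$ takes values $i\in\{1,\ldots,n-1\}$. Then
\[
\Pp(K_n=i)=\Pp(M=i+1)=\frac{2i}{n(n-1)},
\]
which is exactly \eqref{eq:young-parent}. This also agrees with \eqref{eq:parent-law} for $w_i=i$, because $W_{n-1}=n(n-1)/2$.

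There is no real obstacle in this argument. The only point needing care is the bookkeeping that the two orderings are counted without overlap and that $m=1$ is excluded, which is what makes the support of $K_n$ exactly $\{1,\ldots,n-1\}$. Independence across $n$ holds if a fresh pair $(U,V)$ is drawn for each $n$. This matches the paper's coupling through independent parent choices.
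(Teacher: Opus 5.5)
Your proof is correct and is essentially the paper's argument. Both count the $2i$ ordered distinct pairs whose maximum is $i+1$ (one coordinate equals $i+1$, the other lies in $\{1,\ldots,i\}$) and divide by the $n(n-1)$ equally likely pairs.
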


\begin{proof}
For $1\le i<n$, the event $K_n=i$ is the event
$\max\{U,V\}=i+1$.  Exactly $2i$ ordered distinct pairs have maximum $i+1$:
either $U=i+1$ and $V\in\{1,\ldots,i\}$, or conversely.  Since there are
$n(n-1)$ ordered distinct pairs in total,
\[
\Pp(K_n=i)=\frac{2i}{n(n-1)}.
\]
\end{proof}

\paragraph{Implementation.}
An ordered distinct pair can be sampled without rejection.  Draw
$U\sim\mathrm{Unif}\{1,\ldots,n\}$ and
$R\sim\mathrm{Unif}\{1,\ldots,n-1\}$ independently, and set
\[
V=
\begin{cases}
R,&R<U,\\
R+1,&R\ge U.
\end{cases}
\]
Then apply \eqref{eq:sampler}.

\begin{corollary}[Generation complexity]\label{cor:generator}
In a unit-cost word-RAM model with $O(\log n)$-bit words and constant-time
uniform integer sampling, the parent array of a young-age preferential tree
of size $n$ can be generated exactly in $\Theta(n)$ time and $\Theta(n)$
space.  If the edges are streamed instead of stored, the extra working space
is $O(1)$.
\end{corollary}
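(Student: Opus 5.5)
The plan is to build the tree vertex by vertex, storing only the parent array, and to count the work and memory for each new vertex. Allocate an array $P[2..n]$ of $O(\log n)$-bit words. For $m=2,\ldots,n$, generate $K_m$ with the rejection-free method from the Implementation paragraph, using $m$ in place of $n$. Draw $U\sim\mathrm{Unif}\{1,\ldots,m\}$ and $R\sim\mathrm{Unif}\{1,\ldots,m-1\}$. Set $V=R$ if $R<U$ and $V=R+1$ otherwise, and store $P[m]:=\max\{U,V\}-1$.

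\textbf{Step 1 (exactness).} First check that this procedure yields a uniform ordered pair of distinct elements of $\{1,\ldots,m\}$. The map $R\mapsto V$ is a bijection from $\{1,\ldots,m-1\}$ onto $\{1,\ldots,m\}\setminus\{U\}$. So, conditional on $U$, the variable $V$ is uniform on the remaining values, and every ordered distinct pair has probability $1/(m(m-1))$. Proposition~\ref{prop:sampler} then shows that $P[m]$ has law \eqref{eq:young-parent} with $n$ replaced by $m$. The choices for different $m$ use fresh independent draws, so the parents are independent. Independent parent choices with these laws are exactly the definition in \eqref{eq:parent-law}, so the output array has the law of the young-age tree of size $n$.

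\textbf{Step 2 (upper bounds).} Each iteration uses two uniform draws, one comparison, one conditional increment, one maximum, one subtraction and one store. That is $O(1)$ word operations, since every value lies in $\{0,\ldots,n\}$ and fits in a word. Summing over the $n-1$ iterations gives $O(n)$ time. The array takes $n-1$ words, and the loop variables take $O(1)$ words, so the space is $O(n)$. In the streaming variant, each edge $(m,K_m)$ is output as soon as it is computed and nothing is kept. Only $m,U,R,V,K_m$ remain, which is $O(1)$ extra words.

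\textbf{Step 3 (lower bounds).} Any algorithm that writes the full parent array must write $n-1$ entries. This forces $\Omega(n)$ time and $\Omega(n)$ space for the stored output, so both bounds are $\Theta(n)$. No step here is a real obstacle. The one point that needs care is the exactness of the rejection-free pair sampler: that the shift $R\mapsto V$ gives uniformity, and that the parents are independent across $m$. Everything else is direct accounting of operations.
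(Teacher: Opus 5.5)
Your argument is correct and follows the paper's reasoning. The paper gives no separate proof: the corollary is read off from Proposition~\ref{prop:sampler} and the rejection-free pair sampler in the Implementation paragraph. That gives $O(1)$ work per insertion, and the $\Theta(n)$ bounds come from the size of the output. Your explicit check that $R\mapsto V$ is a bijection onto $\{1,\ldots,m\}\setminus\{U\}$, together with fresh independent draws for each $m$, fills in the steps the paper only asserts.
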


The probabilistic results below do not rely on this sampler.  Its role is to
show that the distinguished young-age case also admits a direct randomized
generation algorithm with constant work per insertion.

\section{Expected total path length and the first-order phase transition}
\label{sec:expectation}

Throughout this section, $w_i=i^\alpha$ with fixed $\alpha\in\R$.

Let $U_n$ be uniform on $\{1,\ldots,n\}$, conditionally independent given the
tree.  If the operation \textsc{Trace-To-Root} repeatedly replaces a vertex by
its parent until the root is reached, then its number of pointer traversals is
$D_{U_n}$.  Conditional on the tree,
\begin{equation}\label{eq:conditional-average}
\E(D_{U_n}\mid T_n^{(\alpha)})=\frac{T_n}{n}.
\end{equation}
Thus $T_n/n$ is the quenched average query cost.

\begin{theorem}[Exact expectation and phase transition]
\label{thm:expectation}
For every $\alpha\in\R$ and $n\ge1$,
\begin{equation}\label{eq:ETexact}
\E T_n=\sum_{j=1}^{n-1}(n-j)p_j.
\end{equation}
Moreover:
\begin{enumerate}[label=(\roman*)]
\item If $\alpha>-1$ and $c_\alpha=\alpha+1$, then
\begin{equation}\label{eq:ET-log}
\frac{\E T_n}{n}
=
c_\alpha\log n+\kappa_\alpha+o(1),
\end{equation}
where
\begin{equation}\label{eq:kappa-alpha}
\kappa_\alpha
=
c_\alpha\gamma-c_\alpha
+
\sum_{j=1}^\infty
\left(p_j-\frac{c_\alpha}{j}\right).
\end{equation}

\item If $\alpha=-1$, then there exists a finite constant
$\kappa_{-1}$ such that
\begin{equation}\label{eq:ET-loglog}
\frac{\E T_n}{n}
=
\log\log n+\kappa_{-1}+o(1).
\end{equation}

\item If $\alpha<-1$, then
\begin{equation}\label{eq:ET-constant}
\frac{\E T_n}{n}\longrightarrow
\mu_\alpha,
\qquad
\mu_\alpha:=\sum_{j=1}^\infty p_j<\infty.
\end{equation}
\end{enumerate}
\end{theorem}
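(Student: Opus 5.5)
The exact formula comes directly from Proposition~\ref{prop:bernoulli}. By linearity and \eqref{eq:ED}, using $D_1=0$,
\[
\E T_n=\sum_{i=2}^n\sum_{j=1}^{i-1}p_j .
\]
Exchanging the order of summation, the index $j$ is counted once for each $i\in\{j+1,\dots,n\}$. That is $n-j$ times, which gives \eqref{eq:ETexact}. It is convenient to write this as
\[
\frac{\E T_n}{n}=\sum_{j=1}^{n-1}p_j-\frac1n\sum_{j=1}^{n-1}jp_j .
\]
The asymptotic statements then reduce to the behaviour of two sums, $P_n:=\sum_{j<n}p_j$ and $Q_n:=n^{-1}\sum_{j<n}jp_j$. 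Both are handled with Lemma~\ref{lem:p-asym}.

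For (i), set $\varepsilon_j:=p_j-c_\alpha/j$; this sequence is summable by Lemma~\ref{lem:p-asym}(i). Then
\[
P_n=c_\alpha H_{n-1}+\sum_{j<n}\varepsilon_j=c_\alpha\log n+c_\alpha\gamma+\sum_{j\ge1}\varepsilon_j+o(1).
\]
For $Q_n$, write $jp_j=c_\alpha+j\varepsilon_j$, so $Q_n=c_\alpha(n-1)/n+n^{-1}\sum_{j<n}j\varepsilon_j$. The main point, though routine, is that $n^{-1}\sum_{j<n}j\varepsilon_j\to0$. This is Kronecker's lemma applied to the convergent series $\sum\varepsilon_j$. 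Alternatively, the bound $|\varepsilon_j|=O(j^{-1-\delta_\alpha})$ gives a Cesàro-type estimate $O(n^{-\delta_\alpha})$ or $O(n^{-1}\log n)$ directly. Hence $Q_n\to c_\alpha$, and subtracting gives \eqref{eq:ET-log} with the constant \eqref{eq:kappa-alpha}.

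For (ii), we have $p_j=1/(jH_j)$. First, $jp_j=1/H_j\to0$, so $Q_n\to0$ by Cesàro averaging. Second, we need $P_n=\log\log n+\kappa+o(1)$. I would compare $1/(jH_j)$ with $1/(j\log j)$ and with $\log\log(j+1)-\log\log j$. Since $H_j=\log j+\gamma+O(1/j)$,
\[
\frac1{jH_j}-\frac1{j\log j}=O\!\left(\frac1{j\log^2 j}\right),
\]
which is summable. In addition, $\sum_{2\le j<n}1/(j\log j)-\log\log n$ converges by the integral test (the terms are monotone). The $j=1$ term is simply $p_1=1$. Together these give \eqref{eq:ET-loglog} with $\kappa_{-1}$ defined as the resulting convergent constant. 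This step, organising the error terms so that every remainder is visibly summable, is the only mildly delicate part.

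For (iii), Lemma~\ref{lem:p-asym}(iii) gives $\sum p_j<\infty$, so $P_n\to\mu_\alpha$. Also $jp_j\sim j^{\alpha+1}/\zeta(-\alpha)\to0$, so $Q_n\to0$ by Cesàro averaging; dominated convergence, using $j/n\le1$, works equally well. Therefore $\E T_n/n\to\mu_\alpha$, which is \eqref{eq:ET-constant}.
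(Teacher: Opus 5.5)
Your proposal is correct and follows the paper's proof essentially step for step: you use the same interchange of summation from Proposition~\ref{prop:bernoulli}, the same split into $\sum_{j<n}p_j$ and $n^{-1}\sum_{j<n}jp_j$, and the same three comparisons (with $c_\alpha H_{n-1}$, with $1/(j\log j)$ plus an integral test, and with the summable series $\sum_j p_j$ when $\alpha<-1$). The only cosmetic difference is in (i), where you dispose of $n^{-1}\sum_{j<n}jp_j$ by Kronecker's lemma (or by the explicit $O(j^{-1-\delta_\alpha})$ bound), whereas the paper applies Ces\`aro's theorem directly to $jp_j\to c_\alpha$; the two arguments are equivalent here.
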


\begin{proof}
By Proposition~\ref{prop:bernoulli},
\[
\E T_n
=
\sum_{i=1}^n\E D_i
=
\sum_{i=2}^n\sum_{j=1}^{i-1}p_j.
\]
Interchanging the finite sums gives \eqref{eq:ETexact}.  Dividing by $n$,
\begin{equation}\label{eq:ETsplit}
\frac{\E T_n}{n}
=
\sum_{j=1}^{n-1}p_j
-
\frac1n\sum_{j=1}^{n-1}jp_j.
\end{equation}

Suppose first that $\alpha>-1$.  By Lemma~\ref{lem:p-asym},
\[
\sum_{j=1}^{n-1}p_j
=
c_\alpha H_{n-1}
+
\sum_{j=1}^\infty
\left(p_j-\frac{c_\alpha}{j}\right)
+o(1),
\]
while $jp_j\to c_\alpha$, so Cesàro's theorem gives
\[
\frac1n\sum_{j=1}^{n-1}jp_j\to c_\alpha.
\]
Using $H_{n-1}=\log n+\gamma+o(1)$ proves
\eqref{eq:ET-log}--\eqref{eq:kappa-alpha}.

For $\alpha=-1$, $p_j=1/(jH_j)$.  Since
\[
H_j=\log j+\gamma+O(j^{-1}),
\]
we have, for $j\ge2$,
\[
\frac1{jH_j}
=
\frac1{j\log j}
+
O\!\left(\frac1{j(\log j)^2}\right).
\]
The error is summable and the standard integral comparison gives
\[
\sum_{j=2}^{n-1}\frac1{j\log j}
=
\log\log n+C+o(1)
\]
for a finite constant $C$.  On the other hand,
$jp_j=1/H_j\to0$, so the second term of \eqref{eq:ETsplit} tends to zero.
This proves \eqref{eq:ET-loglog}.

Finally, if $\alpha<-1$, then $\sum_jp_j=\mu_\alpha<\infty$ and
$jp_j\to0$ by Lemma~\ref{lem:p-asym}.  Hence the first term in
\eqref{eq:ETsplit} tends to $\mu_\alpha$, while the second tends to zero by
Cesàro's theorem.
\end{proof}

The theorem can be read directly as an average-case complexity statement.

\begin{corollary}[Average root-traversal complexity]\label{cor:complexity}
The expected conditional average number of parent-pointer traversals satisfies
\[
\E\!\left[\E(D_{U_n}\mid T_n^{(\alpha)})\right]
\sim
\begin{cases}
(\alpha+1)\log n,&\alpha>-1,\\[1mm]
\log\log n,&\alpha=-1,\\[1mm]
\mu_\alpha,&\alpha<-1.
\end{cases}
\]
Thus $\alpha=-1$ separates logarithmic and constant-order average traversal
cost, with an iterated logarithm exactly at criticality.
\end{corollary}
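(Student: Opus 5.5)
The plan is to obtain the corollary directly from Theorem~\ref{thm:expectation}, after rewriting the quantity on the left. The first step is the tower property: since $\E(D_{U_n}\mid T_n^{(\alpha)})=T_n/n$ by \eqref{eq:conditional-average}, we have
\[
\E\!\left[\E(D_{U_n}\mid T_n^{(\alpha)})\right]=\frac{\E T_n}{n}.
\]
So the claim reduces to first-order asymptotics for $\E T_n/n$ in each of the three regimes. Alternatively, one could compute $\E D_{U_n}=n^{-1}\sum_i\E D_i$ and invoke \eqref{eq:ED}; this gives the same expression.

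We then treat each regime separately.
\begin{enumerate}[label=(\roman*)]
\item For $\alpha>-1$, \eqref{eq:ET-log} gives $\E T_n/n=c_\alpha\log n+\kappa_\alpha+o(1)$. The constant $\kappa_\alpha$ in \eqref{eq:kappa-alpha} is finite, because Lemma~\ref{lem:p-asym}(i) makes $\sum_j(p_j-c_\alpha/j)$ absolutely convergent. Since $c_\alpha=\alpha+1>0$ and $\log n\to\infty$, dividing by $c_\alpha\log n$ gives a ratio tending to $1$.
\item For $\alpha=-1$, \eqref{eq:ET-loglog} gives $\log\log n+\kappa_{-1}+o(1)$ with $\kappa_{-1}$ finite. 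Since $\log\log n\to\infty$, the asymptotic equivalence to $\log\log n$ follows in the same way.
\item For $\alpha<-1$, \eqref{eq:ET-constant} gives convergence to $\mu_\alpha$. The equivalence $\sim\mu_\alpha$ for a constant requires $\mu_\alpha\neq 0$. Here $\mu_\alpha=\sum_{j\ge1}p_j\ge p_1=1>0$, so convergence to $\mu_\alpha$ is the same as $\E T_n/n\sim\mu_\alpha$.
\end{enumerate}

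The final sentence about the separation at $\alpha=-1$ is interpretive. It follows by comparing the three growth orders: $\log n$ in regime (i), $\log\log n$ in regime (ii), and the bounded value $\mu_\alpha$ in regime (iii), with $\log\log n=o(\log n)$ and $\log\log n\to\infty$.

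There is no real obstacle here. The only points needing care are the finiteness of the additive constants, which makes them negligible relative to divergent leading terms, and the positivity $\mu_\alpha\ge1$, which keeps the equivalence in case (iii) meaningful.
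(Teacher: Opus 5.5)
Your proposal is correct and matches the paper, which gives no separate proof and presents the corollary as a direct reading of Theorem~\ref{thm:expectation} via \eqref{eq:conditional-average} and the tower property. Your explicit check that $\mu_\alpha\ge p_1=1>0$, which makes convergence to $\mu_\alpha$ a genuine asymptotic equivalence, is a small point the paper leaves implicit.
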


\subsection{Second-order expectation inside the old-age phase}

The leading term for every $\alpha<-1$ is linear, but its deterministic
correction has another threshold at $\alpha=-2$.

\begin{proposition}\label{prop:old-second}
Let $\alpha<-1$.
\begin{enumerate}[label=(\roman*)]
\item If $-2<\alpha<-1$, then
\[
\E T_n
=
\mu_\alpha n
-
\frac{n^{\alpha+2}}
{\zeta(-\alpha)(-\alpha-1)(\alpha+2)}
(1+o(1)).
\]

\item If $\alpha=-2$, then
\[
\E T_n
=
\mu_{-2}n
-
\frac{\log n}{\zeta(2)}
+O(1).
\]

\item If $\alpha<-2$, then
\[
\E T_n
=
\mu_\alpha n-\lambda_\alpha+o(1),
\qquad
\lambda_\alpha:=\sum_{j\ge1}jp_j<\infty.
\]
\end{enumerate}
\end{proposition}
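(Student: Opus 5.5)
From the exact formula \eqref{eq:ETexact},
\[
\E T_n=\sum_{j=1}^{n-1}(n-j)p_j=\mu_\alpha n-n\sum_{j\ge n}p_j-\sum_{j=1}^{n-1}jp_j .
\]
The whole task is therefore to estimate two sums. The first is the tail $R_n:=\sum_{j\ge n}p_j$, which enters multiplied by $n$. The second is the partial sum $S_n:=\sum_{j<n}jp_j$. Both rest on the single input $p_j=j^\alpha/W_j$ with $W_j=\zeta(-\alpha)-\sum_{i>j}i^\alpha$. This gives $W_j=\zeta(-\alpha)+O(j^{\alpha+1})$, and hence
\[
p_j=\frac{j^\alpha}{\zeta(-\alpha)}\bigl(1+O(j^{\alpha+1})\bigr),
\]
a quantitative sharpening of Lemma~\ref{lem:p-asym}(iii). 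The correction term is $O(j^{2\alpha+1})$.

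\textbf{The tail term.} By integral comparison,
\[
R_n=\frac{n^{\alpha+1}}{\zeta(-\alpha)(-\alpha-1)}(1+o(1)),
\]
so $nR_n\sim n^{\alpha+2}/(\zeta(-\alpha)(-\alpha-1))$.
\begin{itemize}
\item For $-2<\alpha<-1$ this tends to infinity.
\item For $\alpha=-2$ it is $O(1)$.
\item For $\alpha<-2$ it is $o(1)$.
\end{itemize}
In cases (ii) and (iii) I need no more than this order.

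\textbf{The partial sum.} The leading behaviour is $jp_j\sim j^{\alpha+1}/\zeta(-\alpha)$.
\begin{itemize}
\item For $-2<\alpha<-1$, the exponent $\alpha+1$ lies in $(-1,0)$, so $S_n\sim n^{\alpha+2}/(\zeta(-\alpha)(\alpha+2))$.
\item For $\alpha=-2$, I expand $jp_j=1/(j\zeta(2))+O(j^{-2})$ using the error bound above. This gives $S_n=\log n/\zeta(2)+O(1)$.
\item For $\alpha<-2$, the series $\lambda_\alpha=\sum_j jp_j$ converges, and $S_n=\lambda_\alpha-\sum_{j\ge n}jp_j=\lambda_\alpha+o(1)$.
\end{itemize}

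\textbf{Combining the pieces.}
\begin{itemize}
\item Case (i): the two contributions are of the same order $n^{\alpha+2}$. Their combined coefficient is
\[
\frac{1}{\zeta(-\alpha)}\Bigl(\frac{1}{-\alpha-1}+\frac{1}{\alpha+2}\Bigr)=\frac{1}{\zeta(-\alpha)(-\alpha-1)(\alpha+2)},
\]
which is exactly the constant in the statement.
\item Case (ii): $nR_n=O(1)$ is absorbed into the error term, leaving $\mu_{-2}n-\log n/\zeta(2)+O(1)$.
\item Case (iii): both $nR_n$ and the tail of $\lambda_\alpha$ are $o(1)$, leaving $\mu_\alpha n-\lambda_\alpha+o(1)$.
\end{itemize}

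\textbf{Main obstacle.} The delicate step is case (ii). There the claimed error is $O(1)$ rather than $o(\log n)$, so the relative error in $p_j$ must be summable after multiplication by $j$. This holds because $jp_j-1/(j\zeta(2))=O(j^{-2})$. In cases (i) and (iii) only first-order asymptotics are needed, since the terms being dropped there are of strictly smaller order.
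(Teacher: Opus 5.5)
Your proposal is correct and follows the paper's proof essentially step for step. You use the same decomposition $\mu_\alpha n-\E T_n=n\sum_{j\ge n}p_j+\sum_{j<n}jp_j$, the same regularly varying tail and partial-sum estimates in case (i), and the same refinement $p_j=1/(\zeta(2)j^2)+O(j^{-3})$ at $\alpha=-2$. The only cosmetic difference is that you obtain this refinement from the uniform bound $W_j=\zeta(-\alpha)+O(j^{\alpha+1})$ valid for all $\alpha<-1$, whereas the paper works from the explicit expansion $W_j=\zeta(2)-1/j+O(j^{-2})$.
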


\begin{proof}
From \eqref{eq:ETexact},
\begin{equation}\label{eq:old-correction}
\mu_\alpha n-\E T_n
=
n\sum_{j\ge n}p_j
+
\sum_{j=1}^{n-1}jp_j.
\end{equation}
Write $c_\alpha:=1/\zeta(-\alpha)$.  By
Lemma~\ref{lem:p-asym},
\[
p_j\sim c_\alpha j^\alpha.
\]
If $-2<\alpha<-1$, the standard power-sum and regularly varying tail
estimates give
\[
n\sum_{j\ge n}p_j
\sim
\frac{c_\alpha}{-\alpha-1}\,n^{\alpha+2},
\qquad
\sum_{j=1}^{n-1}jp_j
\sim
\frac{c_\alpha}{\alpha+2}\,n^{\alpha+2}.
\]
Substitution in \eqref{eq:old-correction} yields
\[
\mu_\alpha n-\E T_n
\sim
\frac{n^{\alpha+2}}{\zeta(-\alpha)}
\left(
\frac1{-\alpha-1}+\frac1{\alpha+2}
\right)
=
\frac{n^{\alpha+2}}
{\zeta(-\alpha)(-\alpha-1)(\alpha+2)}.
\]

At the boundary $\alpha=-2$ we need one term more than the equivalence
$p_j\sim 1/(\zeta(2)j^2)$.  The tail of the Basel series satisfies
\[
W_j
=
\sum_{k=1}^j\frac1{k^2}
=
\zeta(2)-\frac1j+O(j^{-2}),
\]
and therefore
\[
\frac1{W_j}
=
\frac1{\zeta(2)}
+
\frac1{\zeta(2)^2j}
+
O(j^{-2}).
\]
Consequently,
\[
p_j
=
\frac1{\zeta(2)j^2}+O(j^{-3}).
\]
It follows that
\[
n\sum_{j\ge n}p_j=O(1),
\qquad
\sum_{j=1}^{n-1}jp_j
=
\frac1{\zeta(2)}\log n+O(1),
\]
which proves part~(ii).

Finally, if $\alpha<-2$, then
$\sum_{j\ge1}jp_j<\infty$.  Moreover,
\[
n\sum_{j\ge n}p_j
=
O(n^{\alpha+2})=o(1),
\]
while
\[
\sum_{j=1}^{n-1}jp_j
\longrightarrow
\lambda_\alpha.
\]
Equation~\eqref{eq:old-correction} now gives part~(iii).
\end{proof}

\subsection{The young-age mean and a first BST coincidence}

For $\alpha=1$,
\[
p_j=\frac{2}{j+1}.
\]
Equation~\eqref{eq:ETexact} then gives an exact closed form.

\begin{corollary}\label{cor:young-mean}
For the young-age preferential tree,
\begin{equation}\label{eq:young-mean}
\E T_n=2(n+1)H_n-4n.
\end{equation}
\end{corollary}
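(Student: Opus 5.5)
We will substitute $p_j=2/(j+1)$ into the exact formula \eqref{eq:ETexact} of Theorem~\ref{thm:expectation} and evaluate the resulting finite sum in closed form. For $\alpha=1$ we have $W_j=j(j+1)/2$, hence $p_j=j/W_j=2/(j+1)$; note that $p_1=1$ is consistent with this formula. Thus
\[
\E T_n=\sum_{j=1}^{n-1}(n-j)\frac{2}{j+1}.
\]

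The key step is the rewriting $n-j=(n+1)-(j+1)$, which splits each summand:
\[
\E T_n=2(n+1)\sum_{j=1}^{n-1}\frac1{j+1}-2\sum_{j=1}^{n-1}1
=2(n+1)(H_n-1)-2(n-1).
\]
Expanding gives $2(n+1)H_n-2(n+1)-2(n-1)=2(n+1)H_n-4n$, which is \eqref{eq:young-mean}.

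As a sanity check we will verify small cases. For $n=1$ the formula gives $4-4=0=\E T_1$. For $n=2$ it gives $2\cdot3\cdot\tfrac32-8=1$, matching $D_2=1$. There is no real obstacle here; the only point requiring care is the index shift $\sum_{j=1}^{n-1}1/(j+1)=H_n-1$.
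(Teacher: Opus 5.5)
Your proposal is correct and is the same argument as the paper's: substitute $p_j=2/(j+1)$ into \eqref{eq:ETexact} and evaluate the resulting finite sum. Your splitting $n-j=(n+1)-(j+1)$ spells out the summation step that the paper leaves implicit.
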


\begin{proof}
Substituting $p_j=2/(j+1)$ in \eqref{eq:ETexact} and summing gives
\[
2\sum_{j=1}^{n-1}\frac{n-j}{j+1}
=
2(n+1)H_n-4n.
\]
\end{proof}

The right-hand side of \eqref{eq:young-mean} is also the expected internal
path length of a random binary search tree on $n$ keys
\citep{Knuth1998,FlajoletSedgewick2009}.  Section~\ref{sec:youngage} will show
that the agreement stops at the first moment.

\section{Weighted-depth martingales and exact fluctuation identities}
\label{sec:martingale}

In this section the weights $w_i>0$ are arbitrary and deterministic; no growth assumption is imposed.
Let $\F_n$ be the sigma-field generated by the parent choices
$K_2,\ldots,K_n$.  Define the weighted depth sum
\begin{equation}\label{eq:An}
A_n:=\sum_{i=1}^n w_iD_i.
\end{equation}
By \eqref{eq:depth-rec},
\begin{equation}\label{eq:cond-depth}
\E(D_{n+1}\mid\F_n)
=
1+\frac{A_n}{W_n}.
\end{equation}
Introduce the innovation
\begin{equation}\label{eq:xi}
\xi_{n+1}
:=
D_{n+1}-\E(D_{n+1}\mid\F_n),
\qquad n\ge1.
\end{equation}
Then
\[
\E(\xi_{n+1}\mid\F_n)=0.
\]
We put $v_n:=\E\xi_n^2$ for $n\ge2$.

\begin{proposition}[Weighted-depth martingale]\label{prop:weighted-mg}
Let
\[
\widetilde A_n:=A_n-\E A_n.
\]
Then
\begin{equation}\label{eq:M-def}
M_n:=\frac{\widetilde A_n}{W_n}
=
\sum_{j=2}^n p_j\xi_j
\end{equation}
is a square-integrable martingale.
\end{proposition}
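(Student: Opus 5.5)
The plan is to derive a one-step recursion for $A_n$, take expectations to get one for $\E A_n$, subtract, and then divide by $W_n$.

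\emph{Recursion for $A_n$.} By definition,
\[
A_{n+1}=A_n+w_{n+1}D_{n+1}.
\]
By \eqref{eq:cond-depth} and \eqref{eq:xi}, $D_{n+1}=1+A_n/W_n+\xi_{n+1}$. Substituting,
\[
A_{n+1}=A_n\Bigl(1+\frac{w_{n+1}}{W_n}\Bigr)+w_{n+1}+w_{n+1}\xi_{n+1}
=\frac{W_{n+1}}{W_n}A_n+w_{n+1}+w_{n+1}\xi_{n+1}.
\]

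\emph{Centering.} Taking expectations, and using $\E\xi_{n+1}=\E\,\E(\xi_{n+1}\mid\F_n)=0$, gives
\[
\E A_{n+1}=\frac{W_{n+1}}{W_n}\E A_n+w_{n+1}.
\]
Subtracting this from the recursion for $A_{n+1}$ removes the deterministic term $w_{n+1}$:
\[
\widetilde A_{n+1}=\frac{W_{n+1}}{W_n}\widetilde A_n+w_{n+1}\xi_{n+1}.
\]

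\emph{Normalization and telescoping.} Dividing by $W_{n+1}$ yields
\[
M_{n+1}=M_n+p_{n+1}\xi_{n+1}.
\]
At the start, $D_1=0$ gives $A_1=0$ deterministically, so $\widetilde A_1=0$ and $M_1=0$. Telescoping from $n=1$ gives \eqref{eq:M-def}.

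\emph{Martingale property.} Since $\xi_j$ is $\F_j$-measurable, $M_n$ is adapted to $(\F_n)$. The increments satisfy
\[
\E(p_{n+1}\xi_{n+1}\mid\F_n)=p_{n+1}\E(\xi_{n+1}\mid\F_n)=0,
\]
so $(M_n)$ is a martingale.

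\emph{Square integrability.} This is the only point needing a word of care, and it is trivial here. Deterministically $D_i\le i-1$, so $A_n$ is bounded for each fixed $n$. Hence $\widetilde A_n$, $\xi_n$ and $M_n$ are bounded for each fixed $n$, and in particular lie in $L^2$. Consequently $v_n=\E\xi_n^2<\infty$, and orthogonality of martingale increments gives
\[
\E M_n^2=\sum_{j=2}^n p_j^2v_j.
\]
This variance identity is recorded here for later use.

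I do not expect a genuine obstacle. The only subtle point is getting the algebraic identity $1+w_{n+1}/W_n=W_{n+1}/W_n$ right, so that the normalization by $W_n$ exactly cancels the multiplicative growth factor.
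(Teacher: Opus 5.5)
Your proposal is correct and follows essentially the same route as the paper: the one-step recursion $\widetilde A_{n+1}=(W_{n+1}/W_n)\widetilde A_n+w_{n+1}\xi_{n+1}$, division by $W_{n+1}$ to get $M_{n+1}=M_n+p_{n+1}\xi_{n+1}$, and telescoping from $M_1=0$. Your explicit checks of adaptedness, the martingale property, and square integrability (via the deterministic bound $D_i\le i-1$) spell out points the paper leaves implicit.
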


\begin{proof}
Using $A_{n+1}=A_n+w_{n+1} D_{n+1}$ and
\eqref{eq:cond-depth}--\eqref{eq:xi}, centering gives
\[
\widetilde A_{n+1}
=
\left(1+\frac{w_{n+1}}{W_n}\right)\widetilde A_n
+
w_{n+1}\xi_{n+1}.
\]
Since $W_{n+1}=W_n+w_{n+1}$,
\[
\frac{\widetilde A_{n+1}}{W_{n+1}}
=
\frac{\widetilde A_n}{W_n}
+
\frac{w_{n+1}}{W_{n+1}}\xi_{n+1}
=
M_n+p_{n+1}\xi_{n+1}.
\]
As $M_1=0$, iteration proves \eqref{eq:M-def}.
\end{proof}

The martingale in Proposition~\ref{prop:weighted-mg} is the derivative at zero of the normalized weighted-profile martingale described in the introduction.
We include its short direct proof to establish the notation needed for the exact innovation identities.

The same martingale gives an exact representation for total path length.

\begin{theorem}[Exact innovation representation]\label{thm:innovation}
For every $n\ge2$,
\begin{equation}\label{eq:Tinnovation}
T_n-\E T_n
=
\sum_{j=2}^n
\bigl(1+(n-j)p_j\bigr)\xi_j.
\end{equation}
Consequently,
\begin{equation}\label{eq:VarT-exact}
\Var(T_n)
=
\sum_{j=2}^n
\bigl(1+(n-j)p_j\bigr)^2v_j.
\end{equation}
\end{theorem}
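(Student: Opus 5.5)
The plan is to write each depth as its conditional mean plus an innovation, and then express the conditional means through the martingale $M_n$ of Proposition~\ref{prop:weighted-mg}.

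By \eqref{eq:cond-depth} and \eqref{eq:xi}, for every $i\ge2$,
\[
D_i=1+\frac{A_{i-1}}{W_{i-1}}+\xi_i .
\]
Taking expectations gives $\E D_i=1+\E A_{i-1}/W_{i-1}$. Subtracting, and using $D_1=0$ deterministically, we get
\[
D_i-\E D_i=M_{i-1}+\xi_i,\qquad i\ge2 .
\]
Here $M_1=0$.

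Summing over $i=2,\dots,n$ gives
\[
T_n-\E T_n=\sum_{i=2}^n \xi_i+\sum_{i=2}^{n}M_{i-1}.
\]
Next insert $M_{i-1}=\sum_{j=2}^{i-1}p_j\xi_j$ from \eqref{eq:M-def} and interchange the finite sums. For fixed $j$, the coefficient of $p_j\xi_j$ is the number of indices $i$ with $j+1\le i\le n$, namely $n-j$. Hence
\[
\sum_{i=2}^n M_{i-1}=\sum_{j=2}^{n}(n-j)p_j\xi_j .
\]
The terms with $j=n$ contribute zero, which is harmless. Combining the two sums yields \eqref{eq:Tinnovation}.

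For the variance, the coefficients $c_j:=1+(n-j)p_j$ are deterministic. The innovations are square integrable, since $D_j\le j-1$ is bounded. They are also orthogonal: for $j<k$, $\xi_j$ is $\F_{k-1}$-measurable, so
\[
\E(\xi_j\xi_k)=\E\bigl(\xi_j\,\E(\xi_k\mid\F_{k-1})\bigr)=0 .
\]
Therefore $\Var(T_n)=\E(T_n-\E T_n)^2=\sum_j c_j^2 v_j$, which is \eqref{eq:VarT-exact}.

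The only step needing care is the index bookkeeping in the interchange of sums, in particular confirming that the $j=1$ term is absent because $M_1=0$ and that the range is $j\le n-1$ inside $M_{i-1}$. There is no genuine obstacle here.
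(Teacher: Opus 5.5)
Your proof is correct and follows essentially the same route as the paper. The paper writes the increment $(T_{n+1}-\E T_{n+1})-(T_n-\E T_n)=M_n+\xi_{n+1}$, which is your identity $D_i-\E D_i=M_{i-1}+\xi_i$; it then sums and interchanges the finite sums to obtain the coefficient $1+(n-j)p_j$, and gets the variance from orthogonality of martingale differences (your explicit square-integrability and orthogonality check is only a detail the paper leaves implicit).
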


\begin{proof}
From $T_{n+1}=T_n+D_{n+1}$ and \eqref{eq:cond-depth},
\[
(T_{n+1}-\E T_{n+1})-(T_n-\E T_n)
=
M_n+\xi_{n+1}.
\]
Summing from $1$ to $n-1$ and inserting
$M_m=\sum_{j=2}^mp_j\xi_j$ gives
\[
T_n-\E T_n
=
\sum_{j=2}^n\xi_j
+
\sum_{m=1}^{n-1}\sum_{j=2}^mp_j\xi_j
=
\sum_{j=2}^n\bigl(1+(n-j)p_j\bigr)\xi_j.
\]
Martingale differences are orthogonal in $L^2$, so
\eqref{eq:VarT-exact} follows.
\end{proof}

The innovation variances have a simple universal recurrence.

\begin{proposition}[Innovation variance recurrence]\label{prop:vrec}
Let
\[
V_n:=\Var(D_n)
=
\sum_{j=1}^{n-1}p_j(1-p_j).
\]
Then
\begin{equation}\label{eq:v-identity}
v_n
=
V_n-\sum_{j=2}^{n-1}p_j^2v_j
\end{equation}
and, for $n\ge2$,
\begin{equation}\label{eq:v-rec}
v_{n+1}
=
(1-p_n^2)v_n+p_n(1-p_n),
\qquad v_2=0.
\end{equation}
\end{proposition}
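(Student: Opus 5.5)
The plan is to derive \eqref{eq:v-identity} from the law of total variance, and then obtain \eqref{eq:v-rec} by differencing. By \eqref{eq:xi}, $D_n=\E(D_n\mid\F_{n-1})+\xi_n$, and the two terms are orthogonal in $L^2$ because $\E(\xi_n\mid\F_{n-1})=0$. Hence
\[
V_n=\Var(D_n)=\Var\bigl(\E(D_n\mid\F_{n-1})\bigr)+v_n .
\]
By \eqref{eq:cond-depth}, $\E(D_n\mid\F_{n-1})=1+A_{n-1}/W_{n-1}$. Its centred version is therefore exactly $M_{n-1}=\widetilde A_{n-1}/W_{n-1}$.

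Proposition~\ref{prop:weighted-mg} gives $M_{n-1}=\sum_{j=2}^{n-1}p_j\xi_j$. Orthogonality of the martingale differences then yields
\[
\Var(M_{n-1})=\sum_{j=2}^{n-1}p_j^2v_j .
\]
Substituting this into the total-variance decomposition gives \eqref{eq:v-identity}. The formula for $V_n$ is \eqref{eq:VD}, which is taken from Proposition~\ref{prop:bernoulli}.

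For the recurrence, write \eqref{eq:v-identity} at $n+1$ and at $n$ and subtract:
\[
v_{n+1}-v_n=(V_{n+1}-V_n)-p_n^2v_n=p_n(1-p_n)-p_n^2v_n .
\]
This is \eqref{eq:v-rec}, valid for $n\ge2$; in that range both identities involve only indices $\ge2$.

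The initial value comes directly from the model. Since $D_2=1$ deterministically (the only possible parent is the root), we have $\E(D_2\mid\F_1)=1$, so $\xi_2=0$ and $v_2=0$. Equivalently, $V_2=p_1(1-p_1)=0$ because $p_1=1$, consistent with \eqref{eq:v-identity} at $n=2$, where the sum is empty.

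There is no real obstacle in this argument. The only points needing care are index bookkeeping: $M_1=0$, and the empty-sum conventions at $n=2$, so that the differencing step starts correctly. One should also confirm that all quantities are square integrable. This holds because $D_n\le n-1$ is bounded.
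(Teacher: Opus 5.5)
Your proposal is correct and follows essentially the same route as the paper: the law of total variance, with the conditional-mean term identified as $\Var(M_{n-1})=\sum_{j=2}^{n-1}p_j^2v_j$ via the weighted-depth martingale, followed by subtracting consecutive identities. Your explicit justification of $v_2=0$ from $D_2=1$ is a small detail the paper leaves implicit.
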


\begin{proof}
By the law of total variance,
\[
\Var(D_n)
=
\E\!\left[\Var(D_n\mid\F_{n-1})\right]
+
\Var\!\left(\E(D_n\mid\F_{n-1})\right).
\]
The first term is $v_n$.  By \eqref{eq:cond-depth} and
Proposition~\ref{prop:weighted-mg}, the second is
\[
\Var(M_{n-1})
=
\sum_{j=2}^{n-1}p_j^2v_j,
\]
which proves \eqref{eq:v-identity}. Subtracting consecutive identities and using
$V_{n+1}-V_n=p_n(1-p_n)$ gives \eqref{eq:v-rec}.
\end{proof}

\section{A linear-scale limit theorem for deterministic weights}
\label{sec:limit}

We first state the limit theorem for a class of positive deterministic
attachment weights.  Let $w_1,w_2,\ldots>0$, write
$W_n=\sum_{i=1}^n w_i$ and $p_n=w_n/W_n$, and assume that
\begin{equation}
  p_n=O(n^{-1}).
  \label{eq:general-weight-condition}
\end{equation}
All random variables are defined on the coupled sequence of trees obtained
from independent parent choices.  We use the innovations $\xi_n$, their
variances $v_n=\E\xi_n^2$, and the martingale
$M_n=\sum_{j=2}^n p_j\xi_j$ from the preceding section.

\begin{lemma}[Square summability for deterministic weights]
\label{lem:general-square-summability}
Under assumption~\eqref{eq:general-weight-condition},
\begin{equation}
  v_n=O(\log n),
  \qquad
  \sum_{j=2}^{\infty}p_j^2v_j<\infty,
  \qquad
  \sum_{j=2}^{\infty}\frac{v_j}{j^2}<\infty.
  \label{eq:general-square-summability}
\end{equation}
\end{lemma}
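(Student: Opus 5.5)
We bound the innovation variances directly from the identity \eqref{eq:v-identity} of Proposition~\ref{prop:vrec}, rather than by solving the recurrence \eqref{eq:v-rec}. Since every summand $p_j^2v_j$ there is nonnegative, \eqref{eq:v-identity} gives
\[
0\le v_n\le V_n=\sum_{j=1}^{n-1}p_j(1-p_j)\le\sum_{j=1}^{n-1}p_j .
\]
Assumption~\eqref{eq:general-weight-condition} supplies a constant $C$ with $p_j\le C/j$ for all $j\ge1$ (using $p_1=1$ and enlarging $C$ if necessary). Hence $v_n\le C H_{n-1}\le C(1+\log n)$, which is the first claim $v_n=O(\log n)$.

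The two summability claims then follow by comparison. For the first,
\[
p_j^2v_j\le \frac{C^2}{j^2}\,C(1+\log j),
\]
and $\sum_j(\log j)/j^2<\infty$. For the second, $v_j/j^2\le C(1+\log j)/j^2$, which is summable for the same reason. No further structure of the weights is needed.

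There is no real obstacle. The only point needing care is the observation that \eqref{eq:v-identity} makes $v_n\le V_n$ immediate, because $\Var(M_{n-1})\ge0$. Equivalently, the law of total variance gives $v_n=\E\Var(D_n\mid\F_{n-1})\le\Var(D_n)$. This avoids any iteration of the product $\prod(1-p_j^2)$ in \eqref{eq:v-rec}. A second small point is that the bound $p_j\le C/j$ must hold uniformly for all $j$, not only eventually. This is harmless, since only finitely many initial terms are affected and each $p_j\le1$.

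As a remark, one also has the alternative bound $V_n\le\sum_{j<n}p_j$ without the factor $1-p_j$. This shows that in the summable regimes, such as $\alpha<-1$, the $v_n$ are in fact bounded, although the lemma does not require this.
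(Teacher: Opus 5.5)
Your proof is correct and is essentially the paper's argument. The paper gets the same bound $v_n\le\sum_{j=2}^{n-1}p_j$ by dropping the factors $1-p_n^2$ and $1-p_n$ in \eqref{eq:v-rec} and telescoping from $v_2=0$; you drop the nonnegative term $\Var(M_{n-1})$ in \eqref{eq:v-identity}, which is the same estimate before differencing, and the summability step by comparison with $\sum_j(\log j)/j^2$ is identical.
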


\begin{proof}
The innovation-variance recurrence gives
$v_{n+1}\le v_n+p_n$, with $v_2=0$.  Hence
\[
  v_n\le\sum_{j=2}^{n-1}p_j=O(\log n).
\]
Since $p_j=O(j^{-1})$, both series in
\eqref{eq:general-square-summability} are bounded by a constant multiple of
$\sum_{j=2}^{\infty}(\log j)/j^2$, apart from finitely many terms.
\end{proof}

\begin{theorem}[Path-length limit for deterministic weights]
\label{thm:general-path-length-limit}
Let $w_1,w_2,\ldots>0$ be deterministic attachment weights satisfying
\eqref{eq:general-weight-condition}.  Then the series
\begin{equation}
  Z:=\sum_{j=2}^{\infty}p_j\xi_j
  \label{eq:general-limit-series}
\end{equation}
converges almost surely and in $L^2$, and
\begin{equation}
  \frac{T_n-\E T_n}{n}\longrightarrow Z
  \qquad\text{almost surely and in }L^2.
  \label{eq:general-path-length-limit}
\end{equation}
Moreover, $\E Z=0$, the limit is nondegenerate, and
\begin{equation}
  \operatorname{Var}(T_n)\sim\sigma^2n^2,
  \qquad
  \sigma^2:=\E Z^2
    =\sum_{j=2}^{\infty}p_j^2v_j\in(0,\infty).
  \label{eq:general-variance-limit}
\end{equation}
\end{theorem}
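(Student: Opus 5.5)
Convergence of $Z$ is immediate from the martingale $M_n=\sum_{j=2}^n p_j\xi_j$ of Proposition~\ref{prop:weighted-mg}. Its increments are orthogonal, so $\E M_n^2=\sum_{j=2}^n p_j^2v_j$, which stays bounded by Lemma~\ref{lem:general-square-summability}. The $L^2$ martingale convergence theorem then gives $M_n\to Z$ almost surely and in $L^2$, with $\E Z=0$ and $\E Z^2=\sum_{j\ge2}p_j^2v_j=:\sigma^2<\infty$.

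Next I would rewrite Theorem~\ref{thm:innovation} in terms of $M$. The proof of that theorem already gives
\[
T_n-\E T_n=\sum_{j=2}^n\xi_j+\sum_{m=1}^{n-1}M_m .
\]
Dividing by $n$, the second piece $\frac1n\sum_{m<n}M_m$ is a Cesàro mean of an almost surely convergent sequence, so it tends to $Z$ almost surely. In $L^2$ we have $\bigl\|\frac1n\sum_{m<n}M_m-Z\bigr\|_2\le\frac1n\sum_{m<n}\|M_m-Z\|_2+\frac1n\|Z\|_2\to0$ by Cesàro applied to the norms.

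For the first piece $N_n:=\sum_{j=2}^n\xi_j$, a martingale, the $L^2$ bound is $\E N_n^2/n^2=n^{-2}\sum_{j\le n}v_j=O(\log n/n)\to0$, using $v_j=O(\log j)$. For the almost sure statement I would apply Kronecker's lemma. The martingale $\sum_j\xi_j/j$ has $\sum_j v_j/j^2<\infty$ by Lemma~\ref{lem:general-square-summability}, so it converges almost surely, and Kronecker then gives $N_n/n\to0$ almost surely. This proves \eqref{eq:general-path-length-limit}.

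The variance asymptotic follows from $L^2$ convergence, since $\Var(T_n)/n^2=\E[((T_n-\E T_n)/n)^2]\to\E Z^2=\sigma^2$. Once $\sigma^2>0$ is known, this gives $\Var(T_n)\sim\sigma^2n^2$.

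The main obstacle is nondegeneracy, $\sigma^2>0$. I would show that some term $p_j^2v_j$ is positive. From the recurrence \eqref{eq:v-rec}, $v_3=p_2(1-p_2)$, with $p_2=w_2/(w_1+w_2)\in(0,1)$, so $v_3>0$. Moreover, $v_{n+1}\ge(1-p_n^2)v_n$ with $1-p_n^2>0$ for $n\ge2$, so $v_n>0$ for every $n\ge3$. Since every $p_j>0$, this gives $\sigma^2\ge p_3^2v_3>0$. Directly, $\xi_3$ is nonconstant because vertex $3$ attaches to $1$ or to $2$ with positive probabilities, giving depths $1$ or $2$. So the only delicate point is confirming that the recurrence is valid from $n=2$ on, which Proposition~\ref{prop:vrec} provides.
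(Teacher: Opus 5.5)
Your proof is correct and follows the paper's argument. Both use $L^2$-bounded martingale convergence for $M_n$, orthogonality plus Kronecker's lemma for the remainder, and the bound $\sigma^2\ge p_3^2v_3=p_3^2p_2(1-p_2)>0$. The only difference is bookkeeping: you keep $\frac1n\sum_{m<n}M_m$ and apply Ces\`aro, whereas the paper rewrites this as $M_n-\frac1n\sum_{j\le n}jp_j\xi_j$ and handles $\frac1n\sum_{j\le n}(1-jp_j)\xi_j$ in a single Kronecker step using $\sup_j|1-jp_j|<\infty$. The two routes are the same summation by parts.
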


\begin{proof}
By Lemma~\ref{lem:general-square-summability} and orthogonality of
martingale differences,
\[
  \sup_n\E M_n^2
  =\sum_{j=2}^{\infty}p_j^2v_j<\infty.
\]
The martingale convergence theorem therefore gives $M_n\to Z$ almost
surely and in $L^2$, with $\E Z=0$ and
$\E Z^2=\sum_{j=2}^{\infty}p_j^2v_j$.

The exact innovation representation in the preceding section yields
\begin{equation}
  \frac{T_n-\E T_n}{n}
  =M_n+\frac1n\sum_{j=2}^n a_j\xi_j,
  \qquad a_j:=1-jp_j.
  \label{eq:general-limit-remainder}
\end{equation}
Assumption~\eqref{eq:general-weight-condition} implies
$\sup_{j\ge2}|a_j|<\infty$.  Consequently,
\[
  \sum_{j=2}^{\infty}
    \E\left(\frac{a_j\xi_j}{j}\right)^2<\infty.
\]
The martingale series $\sum_{j=2}^{\infty}a_j\xi_j/j$ thus converges
almost surely.  Kronecker's lemma gives
\[
  \frac1n\sum_{j=2}^n a_j\xi_j\longrightarrow0
  \qquad\text{almost surely}.
\]
For convergence in $L^2$, orthogonality and
Lemma~\ref{lem:general-square-summability} give
\[
  \E\left(\frac1n\sum_{j=2}^n a_j\xi_j\right)^2
  =\frac1{n^2}\sum_{j=2}^n a_j^2v_j
  =O\left(\frac{\log n}{n}\right)\longrightarrow0.
\]
Together with $M_n\to Z$, these estimates prove
\eqref{eq:general-path-length-limit}.

Since all weights are strictly positive,
$0<p_2=w_2/(w_1+w_2)<1$ and $p_3>0$.  The recurrence with $v_2=0$
gives $v_3=p_2(1-p_2)>0$.  Therefore
\[
  \sigma^2\ge p_3^2p_2(1-p_2)>0,
\]
which proves nondegeneracy.  Finally, convergence in $L^2$ in
\eqref{eq:general-path-length-limit}, with both sides centered, implies
$\operatorname{Var}(T_n)/n^2\to\E Z^2=\sigma^2$ and hence
\eqref{eq:general-variance-limit}.
\end{proof}

\begin{corollary}[Power-weight path-length limit]
\label{cor:power-path-length-limit}
For every fixed $\alpha\in\mathbb R$, let $w_j=j^\alpha$ and
$p_j=j^\alpha/W_j$.  Then
\begin{equation}
  Z_\alpha:=\sum_{j=2}^{\infty}p_j\xi_j,
  \qquad
  \frac{T_n-\E T_n}{n}\longrightarrow Z_\alpha
  \quad\text{almost surely and in }L^2,
  \label{eq:power-path-length-limit}
\end{equation}
where the series converges almost surely and in $L^2$, and
\begin{equation}
  \operatorname{Var}(T_n)\sim\sigma_\alpha^2n^2,
  \qquad
  \sigma_\alpha^2:=\E Z_\alpha^2
    =\sum_{j=2}^{\infty}p_j^2v_j\in(0,\infty).
  \label{eq:power-variance-limit}
\end{equation}
\end{corollary}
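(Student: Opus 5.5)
The corollary follows from Theorem~\ref{thm:general-path-length-limit} once we check hypothesis~\eqref{eq:general-weight-condition}: for each fixed $\alpha\in\R$, the weights $w_j=j^\alpha$ must satisfy $p_j=O(j^{-1})$. The weights are strictly positive, so positivity is automatic. The plan is to verify the bound regime by regime using Lemma~\ref{lem:p-asym}, and then read off every conclusion from the general theorem with $Z_\alpha:=Z$ and $\sigma_\alpha^2:=\sigma^2$.

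\emph{Case $\alpha>-1$.} Part~(i) of Lemma~\ref{lem:p-asym} gives $p_j=c_\alpha/j+O(j^{-1-\delta_\alpha})$ with $\delta_\alpha>0$. Hence $jp_j\to c_\alpha$, which implies $p_j=O(j^{-1})$.

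\emph{Case $\alpha=-1$.} Part~(ii) gives $p_j=1/(jH_j)\le 1/j$, since $H_j\ge1$.

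\emph{Case $\alpha<-1$.} Part~(iii) gives $p_j\sim j^\alpha/\zeta(-\alpha)$. Since $\alpha<-1$, this is $o(j^{-1})$. Alternatively, one can argue directly: $W_j\ge w_1=1$, so $p_j\le j^\alpha\le j^{-1}$.

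The uniform bound $p_j=O(j^{-1})$ is only needed for large $j$. The finitely many initial indices are harmless, since every $p_j\le1$.

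With the hypothesis verified, Theorem~\ref{thm:general-path-length-limit} gives all the stated conclusions. The series $\sum_{j\ge2}p_j\xi_j$ converges almost surely and in $L^2$. The centered path length satisfies $(T_n-\E T_n)/n\to Z_\alpha$ almost surely and in $L^2$. Finally, $\Var(T_n)\sim\sigma_\alpha^2n^2$ with $\sigma_\alpha^2=\sum_j p_j^2v_j\in(0,\infty)$. Positivity of $\sigma_\alpha^2$ is inherited from the nondegeneracy argument in the theorem; concretely, $\sigma_\alpha^2\ge p_3^2p_2(1-p_2)$ with $p_2=2^\alpha/(1+2^\alpha)\in(0,1)$.

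There is no real obstacle here. The only point requiring care is to state the $O(j^{-1})$ bound uniformly in $j$ rather than merely asymptotically, and the elementary bounds above handle this in each regime.
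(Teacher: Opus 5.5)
Your proposal is correct and follows the paper's proof: you verify $p_j=O(j^{-1})$ in each regime, via $jp_j\to\alpha+1$ for $\alpha>-1$, $p_j=1/(jH_j)\le 1/j$ at $\alpha=-1$, and $W_j\ge w_1=1$ giving $p_j\le j^\alpha\le 1/j$ for $\alpha<-1$. You then read off every conclusion, including nondegeneracy, from Theorem~\ref{thm:general-path-length-limit}.
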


\begin{proof}
For $\alpha>-1$, the power-sum estimate gives
$p_j\sim(\alpha+1)/j$.  At $\alpha=-1$,
$p_j=1/(jH_j)\le1/j$.  For $\alpha<-1$, the bound $W_j\ge w_1=1$
gives $p_j\le j^\alpha\le1/j$.  Thus
\eqref{eq:general-weight-condition} holds in every case, and the result
follows from Theorem~\ref{thm:general-path-length-limit}.
\end{proof}

\begin{remark}[No regular cumulative-weight asymptotic is needed]
For $w_n=1+\frac12\sin(\log n)$ we have $\frac12\le w_n\le\frac32$, hence $p_n\le3/n$.
The sum--integral error is $O(\log n)$ because the absolute derivative of $\sin(\log x)$ is at most $1/x$.
Integration therefore gives
\[
\frac{W_n}{n}=1+\frac14\bigl(\sin(\log n)-\cos(\log n)\bigr)
+O\!\left(\frac{\log n}{n}\right).
\]
Thus $W_n/n$ has no limit, and $W_n\sim c n^\gamma$ fails for every $c,\gamma>0$.
Theorem~\ref{thm:general-path-length-limit} nevertheless applies.
Thus no polynomial cumulative-weight asymptotic is needed.
\end{remark}

For the remainder of the paper we specialize again to $w_i=i^\alpha$.

\subsection{Recovery of the classical uniform recursive tree}
\label{subsec:uniform}

The case $\alpha=0$ is the classical uniform recursive tree.  It provides a
useful exact benchmark for the general theory.

\begin{corollary}[Uniform recursive tree benchmark]\label{cor:uniform}
Let $\alpha=0$.  Then $p_j=1/j$ and, for every $n\ge2$,
\begin{equation}\label{eq:uniform-mean}
\E T_n=n(H_n-1).
\end{equation}
Moreover,
\begin{equation}\label{eq:uniform-martingale}
\frac{T_n-\E T_n}{n}
=
\sum_{j=2}^n\frac{\xi_j}{j},
\end{equation}
and
\begin{equation}\label{eq:uniform-var-exact}
\frac{\Var(T_n)}{n^2}
=
2-H_n^{(2)}-\frac{H_n}{n},
\qquad
H_n^{(2)}:=\sum_{k=1}^n\frac1{k^2}.
\end{equation}
Consequently,
\[
\frac{T_n-n(H_n-1)}{n}
\longrightarrow Z_0
\qquad\text{almost surely and in }L^2,
\]
and
\begin{equation}\label{eq:uniform-var-asym}
\Var(T_n)
\sim
\left(2-\frac{\pi^2}{6}\right)n^2.
\end{equation}
\end{corollary}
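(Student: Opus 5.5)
Each claim of Corollary~\ref{cor:uniform} will come from specializing the general results to $p_j=1/j$, which holds because $W_j=j$.

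\emph{Mean.} By \eqref{eq:ETexact},
\[
\E T_n=\sum_{j=1}^{n-1}\frac{n-j}{j}=nH_{n-1}-(n-1)=n(H_n-1),
\]
since $nH_{n-1}=nH_n-1$. This is \eqref{eq:uniform-mean}.

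\emph{Martingale form.} In Theorem~\ref{thm:innovation} the coefficient becomes $1+(n-j)/j=n/j$. Dividing by $n$ gives \eqref{eq:uniform-martingale}. Equivalently, $a_j=1-jp_j=0$ in \eqref{eq:general-limit-remainder}, so $(T_n-\E T_n)/n=M_n$ exactly.

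\emph{Exact variance.} We solve the recurrence \eqref{eq:v-rec} in closed form. With $p_n=1/n$ it reads
\[
v_{n+1}=\Bigl(1-\frac1{n^2}\Bigr)v_n+\frac{n-1}{n^2},\qquad v_2=0.
\]
The guess is $v_n=1-c/n$ for a constant $c$. This gives $v_2=0$ when $c=2$, and $v_3=p_2(1-p_2)=1/4$, which also matches $1-2/3$? No: $1-2/3=1/3$, so the guess fails. I would therefore compute $v_3,v_4,\dots$ directly and look for a pattern. A cleaner alternative avoids $v_n$ entirely: use \eqref{eq:v-identity}, which gives
\[
\sum_{j=2}^{n}\frac{v_j}{j^2}=V_n-v_n+\frac{v_n}{n^2},
\]
and then
\[
\frac{\Var T_n}{n^2}=\sum_{j=2}^n\frac{v_j}{j^2}=V_n-\Bigl(1-\frac1{n^2}\Bigr)v_n=V_n-v_{n+1}+\frac{n-1}{n^2}.
\]
Here $V_n=H_{n-1}-H^{(2)}_{n-1}$, so everything reduces to finding a closed form for $v_{n+1}$.

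To find it, compare the claim \eqref{eq:uniform-var-exact} with this identity. That forces
\[
v_{n+1}=H_n-2+\frac{H_n}{n}+\ \text{(explicit harmonic corrections)}.
\]
I would then verify this candidate by induction directly from \eqref{eq:v-rec}. An alternative route, probably cleaner, is to prove \eqref{eq:uniform-var-exact} by induction on $n$, using the increment
\[
\frac{\Var T_{n+1}}{(n+1)^2}-\frac{\Var T_n}{n^2}=\frac{v_{n+1}}{(n+1)^2},
\]
together with the relation above linking $v_{n+1}$ to the claimed formula. Checking consistency at $n=2$ is a routine base case:
\[
\Var T_2=0,\qquad 2-\frac54-\frac34=0.
\]
This algebraic bookkeeping with harmonic numbers, namely identifying and verifying the closed form of $v_n$, is the main obstacle. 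I expect it to involve $H_n$ linearly, consistent with the bound $v_n=O(\log n)$ of Lemma~\ref{lem:general-square-summability}.

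\emph{Limits.} Almost-sure and $L^2$ convergence follow from Corollary~\ref{cor:power-path-length-limit} with $\alpha=0$, combined with \eqref{eq:uniform-mean}. The asymptotic \eqref{eq:uniform-var-asym} follows by letting $n\to\infty$ in \eqref{eq:uniform-var-exact}, since $H^{(2)}_n\to\pi^2/6$ and $H_n/n\to0$.
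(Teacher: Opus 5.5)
Your mean, martingale-form and limit arguments are the paper's, and so is your variance strategy. Your identity $\Var(T_n)/n^2=V_n-v_{n+1}+(n-1)/n^2$ is exactly the paper's $\Var(T_n)/n^2=\Var(D_{n+1})-v_{n+1}$, since $V_{n+1}-V_n=(n-1)/n^2$. The paper then inserts a closed form for $v_{n+1}$, proved by induction from \eqref{eq:v-rec}. You never supply that closed form: you leave $v_{n+1}$ as $H_n-2+H_n/n$ plus unspecified ``harmonic corrections'' and defer the check. As written, \eqref{eq:uniform-var-exact} is therefore not yet proved. The missing step is routine, not a missing idea.

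In fact the corrections are zero. Solving your identity against the target, with $V_{n+1}=H_n-H_n^{(2)}$, gives exactly $v_{n+1}=\frac{n+1}{n}H_n-2$, i.e.\ $v_n=\frac{n}{n-1}H_{n-1}-2$. This gives $v_2=0$ and $v_3=1/4$, and the induction step is
\[
\Bigl(1-\frac1{n^2}\Bigr)\Bigl(\frac{n}{n-1}H_{n-1}-2\Bigr)+\frac{n-1}{n^2}
=\frac{n+1}{n}H_{n-1}+\frac{n+1}{n^2}-2
=\frac{n+1}{n}H_n-2 .
\]
Subtracting this from $V_{n+1}=H_n-H_n^{(2)}$ gives $2-H_n^{(2)}-H_n/n$.

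Your alternative route also closes, and it is a legitimate variant that never needs $v_n$. Put $S_n:=\Var(T_n)/n^2$. The increment together with $v_{n+1}=V_{n+1}-S_n$ gives
\[
S_{n+1}=\Bigl(1-\frac{1}{(n+1)^2}\Bigr)S_n+\frac{H_n-H_n^{(2)}}{(n+1)^2},\qquad S_2=0,
\]
and a short computation shows that $2-H_n^{(2)}-H_n/n$ satisfies this recurrence. Either check has to be written out for the proposal to be complete.
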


\begin{proof}
The expectation follows from Theorem~\ref{thm:expectation}:
\[
\E T_n
=
\sum_{j=1}^{n-1}\frac{n-j}{j}
=
nH_{n-1}-(n-1)
=
n(H_n-1).
\]
Since $p_j=1/j$, the coefficient in
Theorem~\ref{thm:innovation} simplifies to
\[
1+(n-j)p_j
=
1+\frac{n-j}{j}
=
\frac{n}{j},
\]
which proves \eqref{eq:uniform-martingale}.

For the innovation variances, recurrence~\eqref{eq:v-rec} becomes
\[
v_{n+1}
=
\left(1-\frac1{n^2}\right)v_n
+
\frac1n\left(1-\frac1n\right),
\qquad v_2=0.
\]
A direct induction gives
\begin{equation}\label{eq:uniform-v}
v_n=\frac{n}{n-1}H_{n-1}-2,
\qquad n\ge2.
\end{equation}
By \eqref{eq:uniform-martingale} and orthogonality,
\[
\frac{\Var(T_n)}{n^2}
=
\sum_{j=2}^n\frac{v_j}{j^2}.
\]
Applying identity~\eqref{eq:v-identity} with $n+1$ yields
\[
\sum_{j=2}^n\frac{v_j}{j^2}
=
\Var(D_{n+1})-v_{n+1}.
\]
Here Proposition~\ref{prop:bernoulli} and \eqref{eq:uniform-v} give
\[
\Var(D_{n+1})=H_n-H_n^{(2)},
\qquad
v_{n+1}=\frac{n+1}{n}H_n-2.
\]
Subtraction proves \eqref{eq:uniform-var-exact}.  Finally,
$H_n^{(2)}\to\pi^2/6$ and $H_n/n\to0$, while the asserted almost-sure and
$L^2$ convergence is the specialization of
Corollary~\ref{cor:power-path-length-limit}.
\end{proof}

The limiting result for the uniform recursive tree is classical
\citep{Mahmoud1991,DobrowFill1999}.  Corollary~\ref{cor:uniform} is included
to show that the general innovation framework specializes exactly to this
benchmark; no novelty is claimed for the classical limit itself.

Combining Theorem~\ref{thm:expectation} and
Corollary~\ref{cor:power-path-length-limit} gives the pathwise average-case complexity.

\begin{corollary}[Pathwise phase transition]\label{cor:pathwise}
The following convergences hold almost surely and in $L^2$:
\[
\begin{array}{lll}
\alpha>-1:
&
\dfrac{T_n}{n}-(\alpha+1)\log n
\longrightarrow
\kappa_\alpha+Z_\alpha,
\\[4mm]
\alpha=-1:
&
\dfrac{T_n}{n}-\log\log n
\longrightarrow
\kappa_{-1}+Z_{-1},
\\[4mm]
\alpha<-1:
&
\dfrac{T_n}{n}
\longrightarrow
\mu_\alpha+Z_\alpha.
\end{array}
\]
In particular,
\[
\frac{T_n}{n\log n}\to\alpha+1
\quad\text{a.s. for }\alpha>-1,
\]
and
\[
\frac{T_n}{n\log\log n}\to1
\quad\text{a.s. for }\alpha=-1.
\]
\end{corollary}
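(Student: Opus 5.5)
The plan is to write $T_n/n$ as a deterministic part plus a random part and treat each separately. I will use
\[
\frac{T_n}{n}=\frac{\E T_n}{n}+\frac{T_n-\E T_n}{n}.
\]
The random part is handled directly by Corollary~\ref{cor:power-path-length-limit}. It gives $(T_n-\E T_n)/n\to Z_\alpha$ almost surely and in $L^2$ for every real $\alpha$.

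The deterministic part comes from Theorem~\ref{thm:expectation}. In the regime $\alpha>-1$, equation~\eqref{eq:ET-log} gives $\E T_n/n-(\alpha+1)\log n=\kappa_\alpha+o(1)$. At $\alpha=-1$, equation~\eqref{eq:ET-loglog} gives $\E T_n/n-\log\log n=\kappa_{-1}+o(1)$. For $\alpha<-1$, equation~\eqref{eq:ET-constant} gives $\E T_n/n\to\mu_\alpha$.

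Adding a deterministic sequence converging to a constant to a sequence converging almost surely and in $L^2$ preserves both modes of convergence. A deterministic $o(1)$ term tends to zero in $L^2$ trivially. This yields the three displayed convergences, with limits $\kappa_\alpha+Z_\alpha$, $\kappa_{-1}+Z_{-1}$ and $\mu_\alpha+Z_\alpha$.

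For the two ratio statements I divide the first two convergences by $\log n$ and by $\log\log n$, respectively. For $\alpha>-1$,
\[
\frac{T_n}{n\log n}=(\alpha+1)+\frac{1}{\log n}\left(\frac{T_n}{n}-(\alpha+1)\log n\right).
\]
The bracket converges almost surely to the finite random variable $\kappa_\alpha+Z_\alpha$. Here $Z_\alpha$ is almost surely finite because it is an $L^2$ limit and the series converges almost surely. Since the bracket is almost surely finite and $1/\log n\to0$, the second term tends to zero almost surely, giving the limit $\alpha+1$. The case $\alpha=-1$ is identical with $\log\log n$ in place of $\log n$, giving the limit $1$.

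There is no genuine obstacle, since every ingredient is already established. The only point needing care is that the almost sure ratio limits require the pathwise finiteness of the limit, not merely convergence in distribution. That finiteness is supplied by the almost sure part of Corollary~\ref{cor:power-path-length-limit}.
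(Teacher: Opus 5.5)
Your proposal is correct and takes essentially the same approach as the paper, which proves this corollary by combining the mean asymptotics of Theorem~\ref{thm:expectation} with the centered almost-sure and $L^2$ limit of Corollary~\ref{cor:power-path-length-limit}. You also make explicit one detail the paper leaves implicit: the ratio limits use the almost-sure finiteness of $\kappa_\alpha+Z_\alpha$ and of $\kappa_{-1}+Z_{-1}$.
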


\begin{remark}
The limit theorem distinguishes two levels of behavior.  The dominant
complexity scale is deterministic: logarithmic, iterated logarithmic, or
constant.  After removing that scale, however, an order-one random correction
survives in the average cost, equivalently an order-$n$ random correction
survives in total path length.
\end{remark}

\section{The summable old-age phase}\label{sec:oldage}

When $\alpha<-1$, the total attachment weight is finite:
\[
W_\infty
:=
\sum_{i=1}^\infty i^\alpha
=
\zeta(-\alpha).
\]
This permits a more concrete interpretation of the limit in
Corollary~\ref{cor:pathwise}.

\begin{theorem}[Random constant-time limit]\label{thm:old-random}
Let $\alpha<-1$ and define
\[
A_\infty
:=
\sum_{i=1}^\infty i^\alpha D_i.
\]
Then $A_\infty<\infty$ almost surely and
\begin{equation}\label{eq:old-pathwise}
\frac{T_n}{n}
\longrightarrow
1+\frac{A_\infty}{\zeta(-\alpha)}
\end{equation}
almost surely and in $L^2$.
Moreover,
\[
Z_\alpha
=
\frac{A_\infty-\E A_\infty}{\zeta(-\alpha)}
\]
and
\[
\mu_\alpha
=
1+\frac{\E A_\infty}{\zeta(-\alpha)}.
\]
\end{theorem}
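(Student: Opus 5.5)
The plan is to work with the weighted-depth martingale $M_n=\widetilde A_n/W_n$ of Proposition~\ref{prop:weighted-mg} and the identity $T_{n+1}-T_n=D_{n+1}=1+A_n/W_n+\xi_{n+1}$ coming from \eqref{eq:cond-depth}. First, finiteness and integrability of $A_\infty$: since $A_n$ is nondecreasing in $n$, monotone convergence gives $A_n\uparrow A_\infty$ and
\[
\E A_\infty=\sum_{i\ge1}i^\alpha\E D_i\le\Bigl(\sum_{i\ge1} i^\alpha\Bigr)\mu_\alpha=\zeta(-\alpha)\mu_\alpha<\infty,
\]
by \eqref{eq:ED} and Lemma~\ref{lem:p-asym}(iii). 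Hence $A_\infty<\infty$ almost surely. For $L^2$, the bound $\E D_i^2=V_i+(\E D_i)^2\le\mu_\alpha+\mu_\alpha^2$ is uniform in $i$, so Minkowski's inequality gives $\|A_\infty\|_2\le\sum_i i^\alpha\|D_i\|_2<\infty$, and $A_n\to A_\infty$ in $L^2$ by dominated convergence, since $0\le A_n\le A_\infty$.

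Second, I identify $Z_\alpha$. Because $W_n\to\zeta(-\alpha)$ and $\E A_n\to\E A_\infty$, we get $M_n=(A_n-\E A_n)/W_n\to(A_\infty-\E A_\infty)/\zeta(-\alpha)$ almost surely and in $L^2$. Theorem~\ref{thm:general-path-length-limit} already gives $M_n\to Z_\alpha$ in both senses, so the two limits coincide almost surely.

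Third, I handle the pathwise limit of $T_n/n$. Write
\[
\frac{T_n}{n}=\frac1n+\frac1n\sum_{m=1}^{n-1}\Bigl(1+\frac{A_m}{W_m}\Bigr)+\frac1n\sum_{j=2}^n\xi_j .
\]
The innovation term tends to $0$ almost surely by Kronecker's lemma, because $\sum_j v_j/j^2<\infty$ (Lemma~\ref{lem:general-square-summability}). It also tends to $0$ in $L^2$, since its second moment is $n^{-2}\sum_{j\le n}v_j=O(\log n/n)$. The middle term is a Cesàro mean of $1+A_m/W_m$, which converges almost surely to $1+A_\infty/\zeta(-\alpha)$, so Cesàro gives the almost-sure limit. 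For $L^2$, I use that $A_m/W_m\to A_\infty/\zeta(-\alpha)$ in $L^2$, which holds because $A_m\to A_\infty$ in $L^2$ and $W_m\to\zeta(-\alpha)$ deterministically. The Cesàro average of an $L^2$-convergent sequence converges in $L^2$ to the same limit, by the triangle inequality applied to the norms $\|A_m/W_m-A_\infty/\zeta(-\alpha)\|_2$. This proves \eqref{eq:old-pathwise}.

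Finally, taking expectations in \eqref{eq:old-pathwise}, which is legitimate by $L^2$ and hence $L^1$ convergence, gives $\E T_n/n\to1+\E A_\infty/\zeta(-\alpha)$. Comparing with \eqref{eq:ET-constant} yields $\mu_\alpha=1+\E A_\infty/\zeta(-\alpha)$. Alternatively, this follows from the decomposition $T_n/n=\E T_n/n+(T_n-\E T_n)/n$ together with Corollary~\ref{cor:pathwise}.

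The only step needing care is the $L^2$ control of $A_\infty$, which rests on the uniform second-moment bound for $D_i$. That bound is immediate from Proposition~\ref{prop:bernoulli} together with summability of $p_j$. I therefore expect no real obstacle beyond bookkeeping the convergence modes.
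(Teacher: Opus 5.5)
Your proof is correct, and for the main convergence it takes a different route from the paper. Your identification of $Z_\alpha$ is the same as the paper's: both are almost-sure limits of $M_n$, so they coincide.

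The paper then proceeds in two steps. First it obtains $\mu_\alpha=1+\E A_\infty/\zeta(-\alpha)$ by letting $n\to\infty$ in the exact mean identity $\E D_{n+1}=1+\E A_n/W_n$. It then gets the limit of $T_n/n$ by substituting this into Corollary~\ref{cor:pathwise}, that is, $T_n/n\to\mu_\alpha+Z_\alpha$. That corollary rests on the centered innovation representation with coefficients $1+(n-j)p_j$ and on the remainder estimate in Theorem~\ref{thm:general-path-length-limit}.

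You instead use the uncentered one-step identity $D_{m+1}=1+A_m/W_m+\xi_{m+1}$. This shows directly that $T_n/n$ is a Ces\`aro mean of the conditional means $1+A_m/W_m$, plus an innovation average that Kronecker's lemma removes. You read off $\mu_\alpha$ only afterwards, by taking expectations in the $L^2$ limit.

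Your route is more self-contained, and it shows transparently why the limit is $1+A_\infty/\zeta(-\alpha)$. Its cost is the separate $L^2$ bound on $A_\infty$ via Minkowski, which the paper never needs explicitly. You could also skip that bound: the almost-sure identification $Z_\alpha=(A_\infty-\E A_\infty)/\zeta(-\alpha)$, together with $Z_\alpha\in L^2$ and $\E A_\infty<\infty$, already gives $A_\infty\in L^2$.

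One small slip: since $D_1=0$, you have $T_n=\sum_{m=1}^{n-1}D_{m+1}$. The extra $1/n$ in your displayed decomposition should therefore not be there. It is $O(1/n)$ and affects none of the limits.
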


\begin{proof}
For $\alpha<-1$, Proposition~\ref{prop:bernoulli} and
Lemma~\ref{lem:p-asym} imply $\sup_i\E D_i<\infty$.  Hence
\[
\E\left[\sum_{i=1}^\infty i^\alpha D_i\right]
=
\sum_{i=1}^\infty i^\alpha\E D_i
<\infty.
\]
Thus $A_\infty<\infty$ almost surely and $A_n\uparrow A_\infty$ in $L^1$.

Since $W_n\to\zeta(-\alpha)$,
\[
M_n
=
\frac{A_n-\E A_n}{W_n}
\longrightarrow
\frac{A_\infty-\E A_\infty}{\zeta(-\alpha)}
\]
in probability.  But Corollary~\ref{cor:power-path-length-limit} gives
$M_n\to Z_\alpha$ almost surely and in $L^2$, so the two limits agree.

Next, from \eqref{eq:cond-depth},
\[
\E D_{n+1}
=
1+\frac{\E A_n}{W_n}.
\]
The left side tends to
$\sum_{j\ge1}p_j=\mu_\alpha$, while the right side tends to
$1+\E A_\infty/\zeta(-\alpha)$.  Therefore
\[
\mu_\alpha
=
1+\frac{\E A_\infty}{\zeta(-\alpha)}.
\]
Substitution in Corollary~\ref{cor:pathwise} gives
\eqref{eq:old-pathwise}.  The $L^2$ convergence follows from that corollary.
\end{proof}

Thus the old-age regime has constant average traversal complexity but is not
self-averaging: early random attachment decisions remain visible in the
limiting cost.

\section{Exact second-order analysis for young-age preference}
\label{sec:youngage}

We return to $\alpha=1$.  In this case
\[
p_j=\frac{2}{j+1}.
\]
\citet{LyonMahmoud2020} showed that the insertion depth has the same
distribution as the insertion depth in a random binary search tree and, in
particular,
\begin{equation}\label{eq:young-depth-var}
\Var(D_n)
=
2(H_n+1)-4H_n^{(2)},
\end{equation}
where
\[
H_n^{(2)}:=\sum_{k=1}^n\frac1{k^2}.
\]

The martingale innovations make it possible to determine the global
path-length variance constant explicitly.

\begin{proposition}[Exact innovation variance]\label{prop:young-v}
For the young-age tree and every $n\ge2$,
\begin{equation}\label{eq:young-v}
v_n
=
\frac{2(n+1)(n+2)}{n(n-1)}
\left(
H_n-3+\frac6{n+2}
\right).
\end{equation}
Consequently,
\[
v_n=2H_n-6+o(1).
\]
\end{proposition}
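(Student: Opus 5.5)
Since the recurrence \eqref{eq:v-rec} of Proposition~\ref{prop:vrec} determines $(v_n)$ uniquely from $v_2=0$, we plan to prove \eqref{eq:young-v} by induction on $n$. With $p_n=2/(n+1)$ the recurrence reads
\[
v_{n+1}=\Bigl(1-\frac{4}{(n+1)^2}\Bigr)v_n+\frac{2(n-1)}{(n+1)^2}
=\frac{(n-1)(n+3)}{(n+1)^2}\,v_n+\frac{2(n-1)}{(n+1)^2}.
\]
The factor $(n-1)(n+3)/(n+1)^2$ already suggests the prefactor in \eqref{eq:young-v}. Write $u_n:=n(n-1)v_n/\bigl((n+1)(n+2)\bigr)$, so the claim is $u_n=2(H_n-3+6/(n+2))$. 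Multiplying the recurrence by $(n+1)n/\bigl((n+2)(n+3)\bigr)$ gives
\[
u_{n+1}=u_n+\frac{2n(n-1)}{(n+1)(n+2)(n+3)}.
\]
This telescoping form is the key simplification, because $(n+1)^2$ cancels against the chosen normalization.

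For the base case, $n=2$ gives $2(H_2-3+6/4)=2(3/2-3/2)=0$, which matches $v_2=0$. For the inductive step it remains to check
\[
2\Bigl(\frac{1}{n+1}+\frac{6}{n+3}-\frac{6}{n+2}\Bigr)=\frac{2n(n-1)}{(n+1)(n+2)(n+3)},
\]
which is a partial-fraction identity. Indeed, the left side over the common denominator has numerator $(n+2)(n+3)-6(n+1)=n^2-n$. We will present this verification briefly, since it is the only computation. The main, though mild, obstacle is spotting this normalization $u_n$; the alternative of guessing the closed form from small values and checking the recurrence directly is messier. As a sanity check, we can also compare with \eqref{eq:v-identity} and \eqref{eq:young-depth-var}.

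Finally, for the asymptotic statement, note that $2(n+1)(n+2)/\bigl(n(n-1)\bigr)=2+O(1/n)$ and that $H_n-3+6/(n+2)=O(\log n)$. Hence $v_n=2(H_n-3)+O(\log n/n)+O(1/n)=2H_n-6+o(1)$.
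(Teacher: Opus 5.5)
Your proof is correct and is essentially the paper's argument. The paper simply asserts that \eqref{eq:young-v} satisfies recurrence \eqref{eq:v-rec} with $v_2=0$ by direct substitution. Your normalization $u_n=n(n-1)v_n/\bigl((n+1)(n+2)\bigr)$ turns the recurrence into the telescoping step $u_{n+1}-u_n=2n(n-1)/\bigl((n+1)(n+2)(n+3)\bigr)$, which is a tidy way of carrying out that substitution, and your partial-fraction check and asymptotic step are both right.
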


\begin{proof}
For $p_n=2/(n+1)$, recurrence \eqref{eq:v-rec} becomes
\[
v_{n+1}
=
\left(1-\frac4{(n+1)^2}\right)v_n
+
\frac{2(n-1)}{(n+1)^2},
\qquad v_2=0.
\]
Direct substitution shows that \eqref{eq:young-v} satisfies this recurrence
and the initial condition.  The asymptotic statement follows from
$H_n=\log n+\gamma+o(1)$.
\end{proof}

\begin{theorem}[Young-age path-length variance]\label{thm:young-var}
For every $n\ge2$,
\begin{equation}\label{eq:young-var-exact}
\Var(T_n)
=
\sum_{j=2}^n
\left(\frac{2n-j+1}{j+1}\right)^2
v_j,
\end{equation}
where $v_j$ is given by \eqref{eq:young-v}.  Furthermore,
\begin{equation}\label{eq:young-var-asym}
\Var(T_n)
\sim
\left(8-\frac{2\pi^2}{3}\right)n^2.
\end{equation}
\end{theorem}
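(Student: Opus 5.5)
The exact identity \eqref{eq:young-var-exact} is the specialization of \eqref{eq:VarT-exact} in Theorem~\ref{thm:innovation}. With $p_j=2/(j+1)$ the coefficient becomes
\[
1+(n-j)\frac{2}{j+1}=\frac{j+1+2n-2j}{j+1}=\frac{2n-j+1}{j+1},
\]
and squaring this gives \eqref{eq:young-var-exact}, with $v_j$ supplied by Proposition~\ref{prop:young-v}. No further work is needed for the first part.

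For the asymptotic \eqref{eq:young-var-asym}, I will not try to evaluate the weighted sum directly. Corollary~\ref{cor:power-path-length-limit} with $\alpha=1$ already gives $\Var(T_n)\sim\sigma_1^2n^2$, where
\[
\sigma_1^2=\sum_{j\ge2}p_j^2v_j=\sum_{j\ge2}\frac{4v_j}{(j+1)^2}.
\]
It therefore suffices to identify this constant. As in the uniform benchmark (Corollary~\ref{cor:uniform}), the key step is to apply identity \eqref{eq:v-identity} of Proposition~\ref{prop:vrec} at index $n+1$. This makes the partial sums telescope exactly:
\[
\sum_{j=2}^{n}p_j^2v_j=V_{n+1}-v_{n+1},
\]
where $V_{n+1}=\Var(D_{n+1})$.

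Next I insert the two known closed forms. By \eqref{eq:young-depth-var} (Lyon--Mahmoud), or equivalently by summing $p_j(1-p_j)$,
\[
V_{n+1}=2(H_{n+1}+1)-4H_{n+1}^{(2)}.
\]
By Proposition~\ref{prop:young-v},
\[
v_{n+1}=2H_{n+1}-6+o(1).
\]
On subtracting, the divergent $2H_{n+1}$ terms cancel. This leaves
\[
V_{n+1}-v_{n+1}=2+6-4H_{n+1}^{(2)}+o(1)\longrightarrow 8-4\cdot\frac{\pi^2}{6}=8-\frac{2\pi^2}{3}.
\]
Hence $\sigma_1^2=8-2\pi^2/3$, which proves \eqref{eq:young-var-asym}.

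The only point needing care is the indexing of the Lyon--Mahmoud formula \eqref{eq:young-depth-var} against $V_n=\sum_{j<n}p_j(1-p_j)$, since an off-by-one slip there would change the constant. I would check this directly: $p_1=1$ contributes $0$, and $\sum_{j=2}^{n-1}\bigl(\tfrac{2}{j+1}-\tfrac{4}{(j+1)^2}\bigr)=2(H_n-1)-4(H_n^{(2)}-1)=2(H_n+1)-4H_n^{(2)}$, which matches \eqref{eq:young-depth-var}.
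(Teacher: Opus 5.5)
Your proposal is correct and follows the paper's proof: the exact formula comes from specializing \eqref{eq:VarT-exact}, and the constant comes from the total-variance identity \eqref{eq:v-identity}, the Lyon--Mahmoud formula \eqref{eq:young-depth-var} and Proposition~\ref{prop:young-v}, with the divergent $2H_n$ terms cancelling to leave $8-4H^{(2)}\to 8-2\pi^2/3$. The only differences are that you apply the identity at index $n+1$ instead of $n$, and that you add a direct check of the depth-variance indexing; neither changes the argument.
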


\begin{proof}
Since $p_j=2/(j+1)$,
\[
1+(n-j)p_j
=
1+\frac{2(n-j)}{j+1}
=
\frac{2n-j+1}{j+1}.
\]
Equation~\eqref{eq:young-var-exact} is therefore the specialization of
\eqref{eq:VarT-exact}.

For the limit constant, identity \eqref{eq:v-identity} gives
\[
\sum_{j=2}^{n-1}p_j^2v_j
=
\Var(D_n)-v_n.
\]
Using \eqref{eq:young-depth-var} and Proposition~\ref{prop:young-v},
\[
\Var(D_n)-v_n
=
\bigl(2H_n+2-4H_n^{(2)}\bigr)
-
\bigl(2H_n-6+o(1)\bigr).
\]
Since $H_n^{(2)}\to\pi^2/6$,
\[
\sigma_1^2
=
\sum_{j=2}^\infty p_j^2v_j
=
8-\frac{2\pi^2}{3}.
\]
Corollary~\ref{cor:power-path-length-limit} now gives
\eqref{eq:young-var-asym}.
\end{proof}

Numerically,
\[
8-\frac{2\pi^2}{3}
=
1.4202637326\ldots.
\]

\subsection{Same mean as a random BST, different fluctuations}

Let $P_n^{\mathrm{BST}}$ be the internal path length of a random binary search
tree constructed by inserting $n$ distinct keys in uniformly random order.
Classical analysis gives
\begin{align}
\E P_n^{\mathrm{BST}}
&=
2(n+1)H_n-4n,\label{eq:bst-mean}\\
\Var(P_n^{\mathrm{BST}})
&\sim
\left(7-\frac{2\pi^2}{3}\right)n^2.
\label{eq:bst-var}
\end{align}
See, for example, \citet{Knuth1998} and
\citet{FlajoletSedgewick2009}.

Combining \eqref{eq:young-mean}, \eqref{eq:young-var-asym},
\eqref{eq:bst-mean}, and \eqref{eq:bst-var} yields the following.

\begin{corollary}[Local equality, global separation]\label{cor:bst}
For every $n$,
\[
\E T_n=\E P_n^{\mathrm{BST}},
\]
but
\[
\frac{\Var(T_n)}{n^2}
-
\frac{\Var(P_n^{\mathrm{BST}})}{n^2}
\longrightarrow1.
\]
Thus the young-age tree and random BST have identical expected total path
length but distinct linear-scale fluctuations.
\end{corollary}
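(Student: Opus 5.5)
The corollary combines results already in hand, so the plan is to assemble them rather than prove anything new. The first claim, $\E T_n=\E P_n^{\mathrm{BST}}$ for every $n$, is a direct comparison of closed forms. Corollary~\ref{cor:young-mean} gives $\E T_n=2(n+1)H_n-4n$ exactly, and \eqref{eq:bst-mean} states the same expression for the random BST. To be safe, I would check small cases by hand. For $n=1$ both sides equal $0$. For $n=2$, the formula gives $6\cdot\tfrac32-8=1$, which matches a BST on two keys (internal path length $1$) and the young-age tree (vertex $2$ at depth $1$). This confirms that the two indexing conventions (internal path length counted from depth $0$ at the root) agree, so the identity holds for all $n\ge1$ and not only asymptotically.

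For the second claim, I would split the difference into two limits and subtract. By Theorem~\ref{thm:young-var}, $\Var(T_n)/n^2\to 8-2\pi^2/3$. This rests on Corollary~\ref{cor:power-path-length-limit}, which gives $\Var(T_n)/n^2\to\sigma_1^2=\sum_{j\ge2}p_j^2v_j$, together with the evaluation $\sigma_1^2=\lim_n\bigl(\Var(D_n)-v_n\bigr)$ via \eqref{eq:v-identity}, \eqref{eq:young-depth-var} and Proposition~\ref{prop:young-v}. By \eqref{eq:bst-var}, $\Var(P_n^{\mathrm{BST}})/n^2\to 7-2\pi^2/3$. Since both sequences converge to finite limits, their difference converges to the difference of the limits:
\[
\left(8-\frac{2\pi^2}{3}\right)-\left(7-\frac{2\pi^2}{3}\right)=1 .
\]

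The only point needing care is the constant $8-2\pi^2/3$, because any slip in $v_n$ would shift the gap away from $1$. Specifically, the limit $\Var(D_n)-v_n\to 2+6-4\zeta(2)$ depends on the $O(1)$ term of $v_n$ being exactly $-6$. So I would re-verify \eqref{eq:young-v} against the recurrence at $n=2,3$. At $n=2$ the formula gives $v_2=\tfrac{24}{2}\bigl(\tfrac32-3+\tfrac64\bigr)=0$. At $n=3$ the recurrence gives $v_3=p_2(1-p_2)=\tfrac23\cdot\tfrac13=\tfrac29$. The formula gives $\tfrac{40}{6}\bigl(\tfrac{11}{6}-3+\tfrac65\bigr)=\tfrac{40}{6}\cdot\tfrac1{30}=\tfrac29$, which agrees. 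With this consistency confirmed, the corollary follows immediately, and the interpretive sentence about identical means but distinct linear-scale fluctuations is just a restatement of the two displayed facts.
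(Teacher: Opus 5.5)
Your proposal is correct and is essentially the paper's argument: the mean identity is the coincidence of the closed form in Corollary~\ref{cor:young-mean} with \eqref{eq:bst-mean}, and the variance gap is the difference of the limits $8-2\pi^2/3$ from Theorem~\ref{thm:young-var} and $7-2\pi^2/3$ from \eqref{eq:bst-var}. Your small-$n$ checks of the mean and of \eqref{eq:young-v} at $n=2,3$ are accurate, but they are only sanity checks, since the general identities are already established in the paper.
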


\begin{center}
\begin{tabular}{@{}lcc@{}}
\toprule
 & Young-age tree & Random BST\\
\midrule
Insertion-depth law & same as BST & same as young-age\\
$\E$ total path length
& $2(n+1)H_n-4n$
& $2(n+1)H_n-4n$\\
Asymptotic variance coefficient
& $8-\dfrac{2\pi^2}{3}$
& $7-\dfrac{2\pi^2}{3}$\\
\bottomrule
\end{tabular}
\end{center}

The equality of insertion-depth marginals therefore hides a difference in
the dependence structure across insertion times.  The total path length,
being a sum of strongly dependent depths, exposes this difference at second
order.

\section{Root-traversal costs}\label{sec:alg}

The results can be stated directly in terms of a simple online parent-pointer
data structure.  Each insertion stores one parent pointer.  Consider the
operation

\medskip
\noindent
\begin{minipage}{0.92\textwidth}
\ttfamily
TRACE-TO-ROOT(v)\\
\phantom{xx}cost $\leftarrow 0$\\
\phantom{xx}while v $\neq$ root do\\
\phantom{xxxx}v $\leftarrow$ parent[v]\\
\phantom{xxxx}cost $\leftarrow$ cost + 1\\
\phantom{xx}return cost
\end{minipage}
\medskip

For a uniformly sampled stored vertex $U_n$, the conditional expected
number of pointer traversals is $T_n/n$ by \eqref{eq:conditional-average}.  Hence
Corollary~\ref{cor:pathwise} gives not only an annealed expected complexity
but a pathwise statement about a typical realized data structure.

\begin{corollary}\label{cor:query}
For a realized power-weight recursive tree,
\[
\E(D_{U_n}\mid T_n^{(\alpha)})
=
\begin{cases}
(\alpha+1)\log n+\kappa_\alpha+Z_\alpha+o(1),
&\alpha>-1,\\[1mm]
\log\log n+\kappa_{-1}+Z_{-1}+o(1),
&\alpha=-1,\\[1mm]
\mu_\alpha+Z_\alpha+o(1),
&\alpha<-1,
\end{cases}
\]
almost surely.
\end{corollary}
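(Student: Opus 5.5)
The statement is Corollary~\ref{cor:query}. It is a direct restatement of Corollary~\ref{cor:pathwise} in terms of the quenched query cost, so the plan is to combine two earlier facts and do no new probabilistic work.

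First, by \eqref{eq:conditional-average}, the conditional expectation $\E(D_{U_n}\mid T_n^{(\alpha)})$ equals $T_n/n$ identically. Here the conditioning is on the realized tree, and $U_n$ is uniform and conditionally independent of it. This is immediate, because $\E(D_{U_n}\mid\text{tree})=\frac1n\sum_{i=1}^n D_i$. So it suffices to expand $T_n/n$ almost surely in each regime.

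Second, I write
\[
\frac{T_n}{n}=\frac{\E T_n}{n}+\frac{T_n-\E T_n}{n}.
\]
By Corollary~\ref{cor:power-path-length-limit}, the second term equals $Z_\alpha+o(1)$ almost surely for every real $\alpha$. By Theorem~\ref{thm:expectation}, the deterministic first term is:
\begin{itemize}
\item $(\alpha+1)\log n+\kappa_\alpha+o(1)$ for $\alpha>-1$, by \eqref{eq:ET-log};
\item $\log\log n+\kappa_{-1}+o(1)$ for $\alpha=-1$, by \eqref{eq:ET-loglog};
\item $\mu_\alpha+o(1)$ for $\alpha<-1$, by \eqref{eq:ET-constant}.
\end{itemize}
Adding the two expansions gives the three displayed cases, with a random $o(1)$ that tends to zero almost surely. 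Equivalently, this is exactly Corollary~\ref{cor:pathwise}, rewritten with the $o(1)$ made explicit.

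There is no real obstacle. The one point to state carefully is that the $o(1)$ is a random sequence converging to $0$ almost surely: it is the sum of a deterministic null sequence and an almost surely null martingale remainder. This is consistent with the coupled construction, in which all trees are built from one sequence of parent choices, so that the almost-sure statement is meaningful.
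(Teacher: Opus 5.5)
Your proposal is correct and follows the paper's own argument. The identity \eqref{eq:conditional-average} reduces the claim to an almost-sure expansion of $T_n/n$, which is Corollary~\ref{cor:pathwise}, obtained exactly as you do by adding the mean asymptotics of Theorem~\ref{thm:expectation} to the almost-sure limit $(T_n-\E T_n)/n\to Z_\alpha$ from Corollary~\ref{cor:power-path-length-limit}; your remark that the $o(1)$ is a random null sequence under the coupled construction is the right reading.
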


There are two complementary messages.  First, the deterministic leading
complexity changes sharply at $\alpha=-1$.  Second, the random tree-to-tree
variation in the average query cost remains of constant order after the
leading term is removed.  Equivalently, total path-length fluctuations remain
of order $n$ in every regime.

The young-age case adds a generation result: by
Corollary~\ref{cor:generator}, a tree with the same nonuniform parent law can
be sampled in linear total time without maintaining a table of cumulative
weights.  Thus both construction and subsequent root-traversal costs admit
direct average-case analysis.

\section{Discussion and open problems}\label{sec:discussion}
The innovation identities apply to every positive deterministic weight sequence.
The condition $p_n=O(n^{-1})$ is sufficient for a nondegenerate linear-scale limit and includes power weights on both sides of their summability threshold.
For $\alpha>-1$, the almost-sure limit can also be recovered from existing weighted-profile theory.
The present argument gives a direct common treatment that additionally includes the critical, summable, and irregular cumulative-weight cases, and yields explicit variance identities.

The coefficient $1+(n-j)p_j$ describes how the innovation introduced at time $j$ contributes to total path length at time $n$.
Its form explains why fluctuations of order $n$ persist even when the mean depth remains bounded.
For linear weights, evaluating these identities gives a global distinction from random binary search trees despite equality of the insertion-depth marginals.

Several questions remain open for this approach.
\begin{enumerate}[label=(\roman*)]
\item The condition $p_n=O(n^{-1})$ is sufficient, but need not be necessary.
It would be useful to characterize more general deterministic weights that admit a nondegenerate limit on the same scale.
\item The distribution of $Z_\alpha$ is represented here by a martingale series.
Much more is known in the uniform case \citep{Mahmoud1991,DobrowFill1999}; fixed-point descriptions or higher-moment recurrences for general power weights would complement the present analysis.
\item The regularity, monotonicity, and asymptotic behavior of $\alpha\mapsto\sigma_\alpha^2$ deserve separate study.
\item Other global functionals, such as the Wiener index or profile width, may further distinguish linear-weight recursive trees from random binary search trees.
\end{enumerate}

\section{Conclusion}\label{sec:conclusion}
We gave an innovation-based analysis of total path length for deterministic weighted recursive trees.
The exact representation and variance recurrence lead to almost-sure and $L^2$ convergence of centered total path length divided by $n$ whenever $p_n=O(n^{-1})$.
The limiting variance is finite and strictly positive.
This provides a common elementary treatment without a polynomial cumulative-weight asymptotic.

For power weights, the theorem applies to every real exponent.
It complements the existing profile-based limit theory for $\alpha>-1$ and covers the critical and summable regimes $\alpha\le-1$.
In the summable regime the limiting average depth is expressed through the weighted depths of the infinite tree.
For linear weights the variance coefficient is $8-2\pi^2/3$, exceeding that of a random binary search tree by exactly one although all insertion-depth marginals agree.
The same results describe random corrections to the average cost of tracing parent pointers to the root.

\section*{Data availability}
No data were used in this study.

\section*{Funding}
This research did not receive any specific grant from funding agencies in the
public, commercial, or not-for-profit sectors.

\section*{Declaration of competing interest}
The authors declare no competing interests.

\section*{AI-assisted manuscript revision}
During revision, ChatGPT (OpenAI) was used for language editing, literature comparison,
and assistance in checking and presenting mathematical arguments.
The authors take full responsibility for the content of the manuscript.

% Bibliography intentionally embedded in this source file.

\end{document}